\newcommand\marktopleft[1]{%
    \tikz[overlay,remember picture] 
        \node (marker-#1-a) at (0,2ex) {};%
}
\newcommand\markbottomright[1]{%
    \tikz[overlay,remember picture] 
        \node (marker-#1-b) at (0,0) {};%
    \tikz[overlay,remember picture,thick,dashed,inner sep=3pt]
        \node[draw,rectangle,fit=(marker-#1-a.center) (marker-#1-b.center)] {};%
}
\definecolor{dkgreen}{rgb}{0,.6,0}
\definecolor{dkblue}{rgb}{0,0,.6}
\definecolor{dkyellow}{cmyk}{0,0,.8,.3}
\newtheorem{theorem}{Theorem}[section]
\newtheorem{cor}[theorem]{Corollary}
\newtheorem{prop}[theorem]{Proposition}
\newtheorem{definition}[theorem]{Definition}
\newtheorem{ex}[theorem]{Example}
\title{New scattered sequences of order 3}
\author{Daniele Bartoli\thanks{Department of Mathematics and Informatics, University of Perugia, Perugia, Italy.
Email address: daniele.bartoli@unipg.it}\hspace{0.15 cm}  and Alessandro Giannoni\thanks{Department of Mathematics and Informatics, University of Perugia, Perugia, Italy.
Email address: alessandro.giannoni1@studenti.unipg.it}\hspace{0.15 cm}}
\date{}
\begin{document}
\maketitle

\makeatother
\newcommand{\Prf}{\noindent{\bf Proof}.\quad }
\renewcommand{\labelenumi}{(\alph{enumi})}

%6. Contents of the headers
\def\as{{^{\sigma}}}
\def\e{{\`{e }}}
\def\a{{\`{a}}}
\def\qi{{^{q^I}}}
\def\qj{{^{q^J}}}
\def\qio{{^{q^{I_0}}}}
\def\qjo{{^{q^{J_0}}}}
\def\qk{{^{q^K}}}
\def\qji{{^{q^{J-I}}}}
\def\qij{{^{q^{I-J}}}}
\def\qjia{{^{q^{J-I}+1}}}
\def\qija{{^{q^{I-J}+1}}}
\def\field{{\mathbb F_{q^n}}}
\def\us{U_{\alpha ,\beta ,\gamma}^{I,J,n}}
\def\fax{{x^{q^I} +\alpha y^{q^J}}}
\def\fbx{{x\qj +\beta z\qi}}
\def\fcx{{y\qi +\gamma z\qj}}
\def\faxl{{\lambda\qi x\qi +\alpha\lambda\qj y\qj}}
\def\fbxl{{\lambda\qj x\qj +\beta\lambda\qi z\qi}}
\def\fcxl{{\lambda\qi y\qi +\gamma\lambda\qj z\qj}}
\newcommand{\fal}[2]{{\lambda\qi #1\qi +\alpha\lambda\qj #2\qj}}
\newcommand{\fbl}[2]{{\lambda\qj #1\qj +\beta\lambda\qi #2\qi}}
\newcommand{\fcl}[2]{{\lambda\qi #1\qi +\gamma\lambda\qj #2\qj}}
\newcommand{\fa}[2]{{#1^{q^I} +\alpha #2^{q^J}}}
\newcommand{\fb}[2]{{#1\qj +\beta #2\qi}}
\newcommand{\fc}[2]{{#1\qi +\gamma #2\qj}}

\begin{abstract}
Scattered sequences are a generalization of scattered polynomials. So far, only scattered sequences of order one and two have been constructed. In this paper an infinite family of scattered sequences of order three is obtained. Equivalence issues are also considered.
\end{abstract}

\bigskip

\par\noindent
{\bf Keywords:} Scattered linear sets, Scattered sequences, Evasive subspaces

\section{Introduction}

% Let $q$ be a prime power, $n$ be a positive integer, and denote by $\mathbb{F}_{q^n}$ the finite field with $q^n$ elements. \ale{Questa la toglierei dall intro}
$h$-scattered sequences and exceptional $h$-scattered sequences can be seen as the geometrical counterparts of exceptional MRD codes.
Rank-metric codes were introduced already in the late 70's by Delsarte \cite{delsarte1978bilinear} and then rediscovered by Gabidulin a few years later \cite{gabidulin1985theory}. Due to their applications in network coding \cite{MR2450762} and cryptography \cite{gabidulin1991ideals,MR3678916}, they attracted many researchers in the last decade.
RM codes are sets of matrices over a finite field $\mathbb{F}_q$ endowed with the so-called rank distance: the distance  between two elements is defined as the rank of their difference. Among them, of particular interest is the family of rank-metric codes whose parameters are optimal, i.e.  they have the maximum possible cardinality for the given minimum rank. Such codes are called \emph{maximum rank distance (MRD) codes} and constructing new families is an important and active research task.
From a different perspective, rank-metric codes can also  be seen as sets of (restrictions of) $\mathbb{F}_q$-linear homomorphisms from $(\mathbb{F}_{q^n})^m$ to $\mathbb{F}_{q^n}$ equipped with the rank distance; see \cite[Sections 2.2 and 2.3]{bartoli2022exceptional}.  In the case of univariate linearized polynomials such a connection was already exploited in \cite{sheekey2016new} by Sheekey, where the notion of scattered polynomials was introduced; see also \cite{bartoli2018exceptional}. Let $f \in \mathcal{L}_{n,q}[X]$ be a $q$-linearized polynomial and let $t$ be a nonnegative integer with $t\leq n-1$. Then, $f$ is said to be \emph{scattered of index $t$} if for every $x,y \in \mathbb{F}_{q^n}^*$
\[ \frac{f(x)}{x^{q^t}}=\frac{f(y)}{y^{q^t}}\,\, \Longleftrightarrow\,\, \frac{y}x\in \mathbb{F}_q, \]
or equivalently
\begin{equation*} \dim_{\mathbb{F}_q}(\ker(f(x)-\alpha x^{q^t}))\leq 1, \,\,\,\text{for every}\,\,\, \alpha \in \mathbb{F}_{q^n}. \end{equation*}
In a more geometrical setting, a scattered polynomial is connected with a scattered subspace of the projective line; see \cite{blokhuis2000scattered}. From a coding theory point of view,  $f$ is scattered of index $t$ if and only if $\mathcal{C}_{f,t}=\langle x^{q^t}, f(x) \rangle_{\mathbb{F}_{q^n}}$ is an MRD code with $\dim_{\mathbb{F}_{q^n}}(\mathcal{C}_{f,t})=2$.
The polynomial $f$ is said to be \emph{exceptional scattered} of index $t$ if it is scattered of index $t$ as a polynomial in $\mathcal{L}_{\ell n,q}[X]$, for infinitely many $\ell$; see \cite{bartoli2018exceptional}. The classification of exceptional scattered polynomials is still not complete, although it gained the attention of several researchers \cite{Bartoli:2020aa4,bartoli2018exceptional,MR4190573,MR4163074,bartoli2022towards}.

So far, many families of scattered polynomials have been constructed; see \cite{sheekey2016new,lunardon2018generalized,lunardon2000blocking,zanella2019condition,MR4173668,longobardi2021linear,longobardi2021large,NPZ,zanella2020vertex,csajbok2018new,csajbok2018new2,marino2020mrd,blokhuis2000scattered}. 

Among them, only two families are exceptional:
\begin{itemize}
    \item[(Ps)] $f(x)=x^{q^s}$ of index $0$, with $\gcd(s,n)=1$ (polynomials of so-called pseudoregulus type);
    \item[(LP)] $f(x)=x+\delta x^{q^{2s}}$ of index $s$, with $\gcd(s,n)=1$ and $\mathrm{N}_{q^n/q}(\delta)\ne1$ (so-called LP polynomials).
\end{itemize}

The generalization of the notion of exceptional scattered polynomials -- together with  their connection with $\mathbb{F}_{q^n}$-linear MRD codes of $\mathbb{F}_{q^n}$-dimension $2$ -- yielded the introduction of the concept of 
 $\mathbb{F}_{q^n}$-linear MRD codes of \emph{exceptional type}; see \cite{bartoli2021linear}. An $\mathbb{F}_{q^n}$-linear MRD code $\mathcal{C}\subseteq\mathcal{L}_{n,q}[X]$ is an \emph{exceptional MRD code} if the rank metric code
\[ \mathcal{C}_\ell=\langle\mathcal{C}\rangle_{\mathbb{F}_{q^{\ell n}}}\subseteq \mathcal{L}_{\ell n,q}[X] \]
is an MRD code for infinitely many $\ell$.

Only two families of exceptional $\mathbb{F}_{q^n}$-linear MRD codes are known:
\begin{itemize}
    \item[(G)] $\mathcal{G}_{k,s}=\langle x,x^{q^s},\ldots,x^{q^{s(k-1)}}\rangle_{\mathbb{F}_{q^n}}$, with $\gcd(s,n)=1$; see \cite{delsarte1978bilinear,gabidulin1985theory,kshevetskiy2005new};
    \item[(T)] $\mathcal{H}_{k,s}(\delta)=\langle x^{q^s},\ldots,x^{q^{s(k-1)}},x+\delta x^{q^{sk}}\rangle_{\mathbb{F}_{q^n}}$, with $\gcd(s,n)=1$ and $\mathrm{N}_{q^n/q}(\delta)\neq (-1)^{nk}$; see  \cite{sheekey2016new,lunardon2018generalized}. 
\end{itemize}
The first family is known as \emph{generalized Gabidulin codes} and the second one as \emph{generalized twisted Gabidulin codes}, whereas in \cite{bartoli2018exceptional} it has been shown that the only exceptional $\mathbb{F}_{q^n}$-linear MRD codes spanned by monomials are the codes (G), in connection with so-called \emph{Moore exponent sets}. Non-existence results on exceptional MRD codes were provided in \cite[Main Theorem]{bartoli2021linear}.

A generalization of MRD codes of exceptional type is connected with the  notions of $h$-scattered sequences and exceptional $h$-scattered sequences, introduced in \cite{bartoli2022exceptional} as sequences of multivariate linearized polynomials $f_1,\ldots, f_s\in \mathcal{L}_{n,q}[X_1,\ldots,X_m]$, such that there exists 
$\mathcal I=(i_1,\ldots, i_m) \in \mathbb{N}^m$ so that the space 
$$\{ (x_1^{q^{i_1}}, \ldots,x_m^{q^{i_m}},f_1(x_1,\ldots,x_m), \ldots,f_s(x_1,\ldots,x_m) ) \, : \, x_1,\ldots,x_m \in \mathbb{F}_{q^n}\}$$
is $h$-scattered. 

Using this new terminology, exceptional $\mathbb{F}_{q^n}$-linear MRD codes correspond to exceptional scattered sequences of order 1. In  \cite{bartoli2022exceptional} exceptional scattered sequences of order 2 were investigated for the first time. Clearly, when considering sequences of order larger than one, one must check that these examples are really new, i.e. they cannot be obtained as direct sum of two scattered sequences of smaller order. This led to the notion of indecomposability; see \cite{bartoli2022exceptional}.

In this paper we consider sequences of order three and we exhibit the first example of indecomposable exceptional scattered sequence. The equivalence issue is also considered and it is worth mentioning that our family is quite large since it contains many non-equivalent sequences.

\section{Main Results}
Let $q=p^h$, where $p$ is a prime and $h>0$ an integer, and denote by $\mathbb{F}_q$ the finite field with $q$ elements. We denote by $\overline{\mathbb{F}}_q$ the algebraic closure of $\mathbb{F}_q$. Finally, $\mathbb{P}^r(\mathbb{F}_q)$ and $\mathbb{A}^r(\mathbb{F}_q)$ denote, respectively, the $r$-dimensional projective and affine space over $\mathbb{F}_q$. In what follows we will make use of basic notions of algebraic curves and hypersurfaces (mainly Bézout's Theorem), when dealing with polynomial systems.  For a more comprehensive introduction to algebraic varieties and curves we refer the interested reader to \cite{HKT,Hartshorne,Stichtenoth}.

We start with the definition of scattered sequences.

\begin{definition}\cite[Definition 3.1]{bartoli2022exceptional}\label{Def:ScatteredSequence}
 Consider $\mathcal{F}=(f_1,\ldots,f_s)$, with $f_1,\ldots,f_s\in \mathcal{L}_{n,q}[\underline{X}]$. We define $$U_{\mathcal{F}}:=\{(f_1(x_1,\ldots,x_m), \ldots,f_s(x_1,\ldots,x_m) ) \, : \, x_1,\ldots,x_m \in \mathbb{F}_{q^n}\}.$$
 
 Let $\mathcal{I}:=(i_1,i_2,\dots,i_m)\in (\mathbb{Z}/n\mathbb{Z})^m$, we define the \textbf{$\mathcal I$-space}
$U_{\mathcal I,\mathcal{F}}:=U_{\mathcal{F}'},$
where
$$\mathcal F'=(X_1^{q^{i_1}},\ldots, X_m^{q^{i_m}}, f_1,\ldots, f_s).$$
The $s$-tuple $\mathcal{F}:=(f_1,\ldots,f_s)$ is said to be an \textbf{$(\mathcal I;h)_{q^n}$-scattered sequence of order $m$} if the $\mathcal I$-space
$U_{\mathcal I,\mathcal{F}}$
is maximum $h$-scattered in $V(m+s,q^{n})$. An $(\mathcal I;h)_{q^n}$-scattered sequence  $\mathcal{F}:=(f_1,\ldots,f_s)$ of order $m$  is said to be \textbf{exceptional} if it is $h$-scattered over infinitely many extensions $\mathbb{F}_{q^{n\ell}}$ of $\mathbb{F}_{q^n}$.

\end{definition}

The main issue, when considering scattered sequences of order larger than one is given by indecomposability.
\begin{definition}
 An $nm$-dimensional $\mathbb{F}_q$-subspace $U_{\mathcal{H}}$ of $V(k,q^n)$   is said to be \textbf{decomposable} if it can be written as
 $$U_{\mathcal{H}}=U_{\mathcal F}\oplus U_{\mathcal G}$$
 for some nonempty $\mathcal F, \mathcal G$. 
 When this happens we say that $\mathcal{F}$ and $\mathcal{G}$ are \textbf{factors} of $\mathcal{H}$. Furthermore, 
 $U$ is then said to be \textbf{indecomposable} if it is not decomposable.
\end{definition}

Let $\mathcal I:=(i_1,\ldots,i_m)$, $\mathcal J:=(j_1,\ldots,j_{m^{\prime}})$, let $\mathcal{F}=(f_1,\ldots,f_{s})$ and $\mathcal{G}=(g_1,\ldots,g_{s^{\prime}})$ be $(\mathcal I;h)_{{q^n}}$ and $(\mathcal J;h)_{{q^n}}$-scattered sequences of orders $m$ and $m^{\prime}$, respectively.
The {direct sum} $\mathcal{H}:=\mathcal{F}\oplus \mathcal{G}$ is the $(s+s^{\prime})$-tuple $(f_1,\ldots,f_s,g_1,\ldots,g_{s^{\prime}})$. Since 
$$U_{\mathcal I\oplus\mathcal J,\mathcal{H}}=U_{\mathcal I,\mathcal{F}}\oplus U_{\mathcal J,\mathcal{G}},$$
$\mathcal{H}$ is an $(\mathcal I \oplus \mathcal J;h)_{q^{n}}$-scattered sequence of order $m+m^{\prime}$.

The main achievement of this paper is the first example of indecomposable exceptional scattered sequences of order three. 

\begin{definition}Let $n$ be a positive integer and $q$ a prime power. Consider the finite field ${\mathbb {F}}_{{q^n}}$. For each choice of  $\alpha ,\beta, \gamma\in{\mathbb {F}}_{{q^n}}^*$ and  $I,J\in\mathbb{N}$, $I<J<n$ we define the set:
$$U_{\alpha ,\beta ,\gamma}^{I,J,n}:=\{(x,y,z, \fa{x}{y}, \fb{x}{z}, \fc{y}{z} )|x,y,z \in{\mathbb {F}}_{{q^n}} \rbrace.$$
\end{definition}
From now on, we will denote $J-I$ as $K$.
\begin{theorem}\label{thmorig1}
Assume that $\gcd(I,J,n)=1$ and that $K_{\alpha ,\beta ,\gamma}^{I,J}:=\alpha\gamma^{q^{K}}\beta^{q^{K}+1}$ is not an $A_{q,I,J}$-power in $\mathbb{F}_{q^n}$, where $A_{q,I,J}:=q^{2 K} +q^{ K}+1$. Then the set $\us$ is scattered.
\end{theorem}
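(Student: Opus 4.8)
The plan is to unravel the definition of \enquote{scattered} into a semilinear system in $x,y,z$ and then show that any non-trivial solution would force the invariant $K_{\alpha,\beta,\gamma}^{I,J}$ to be an $A_{q,I,J}$-power, contradicting the hypothesis. Since $\us$ is a $3n$-dimensional $\mathbb{F}_q$-subspace of $V(6,q^n)$, being scattered ($h=1$) means $\dim_{\mathbb{F}_q}(\us\cap\langle v\rangle_{\field})\le 1$ for every $v$, i.e.\ whenever a nonzero $u=(x,y,z,\fa{x}{y},\fb{x}{z},\fc{y}{z})\in\us$ satisfies $\lambda u\in\us$ for some $\lambda\in\field^{*}$, then $\lambda\in\mathbb{F}_q$. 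Reading off the first three coordinates forces $\lambda u$ to correspond to the parameters $(\lambda x,\lambda y,\lambda z)$, and equating the last three coordinates gives, after setting $a:=\lambda^{q^I}-\lambda$ and $b:=\lambda^{q^J}-\lambda$, the system
\begin{equation*}
a\,x^{q^I}+\alpha b\,y^{q^J}=0,\qquad b\,x^{q^J}+\beta a\,z^{q^I}=0,\qquad a\,y^{q^I}+\gamma b\,z^{q^J}=0 .
\end{equation*}
Because $\gcd(I,J,n)=1$, the conditions $a=b=0$ are equivalent to $\lambda^{q^I}=\lambda=\lambda^{q^J}$, hence to $\lambda\in\mathbb{F}_{q^{\gcd(I,n)}}\cap\mathbb{F}_{q^{\gcd(J,n)}}=\mathbb{F}_{q^{\gcd(I,J,n)}}=\mathbb{F}_q$. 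Thus it suffices to prove that the only $(x,y,z)$ solving the system with $(a,b)\neq(0,0)$ is the zero vector.

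First I would dispose of the degenerate cases. If exactly one of $a,b$ vanishes, substituting into the three equations and using $\alpha,\beta,\gamma\neq0$ forces $x=y=z=0$ at once; the same elimination shows that when $a,b\neq0$ each of $x,y,z$ must be nonzero, since the vanishing of any one of them propagates through two of the equations to the other two. Hence I may assume $a,b,x,y,z$ are all nonzero and put $t:=b/a\in\field^{*}$.

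The core step is a cyclic elimination over $\field$. Writing $X:=x^{q^I}$, $Y:=y^{q^I}$, $Z:=z^{q^I}$ and using $w^{q^J}=(w^{q^I})^{q^K}$, the system becomes $X=-\alpha t\,Y^{q^K}$, $Y=-\gamma t\,Z^{q^K}$, $Z=-\beta^{-1}t\,X^{q^K}$. Substituting cyclically and cancelling $X\neq0$ yields a single relation of the shape $X^{q^{3K}-1}=-\beta^{q^{2K}}/(\alpha\gamma^{q^K}t^{A_{q,I,J}})$, where $1+q^K+q^{2K}=A_{q,I,J}$. Since $q^{3K}-1=(q^K-1)A_{q,I,J}$, the left-hand side is the $A_{q,I,J}$-th power of $W:=X^{q^K-1}$, while $1/(\alpha\gamma^{q^K})=\beta^{q^K+1}/K_{\alpha,\beta,\gamma}^{I,J}$ rewrites the right-hand side as $-\beta^{A_{q,I,J}}/K_{\alpha,\beta,\gamma}^{I,J}$. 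Collecting terms gives $K_{\alpha,\beta,\gamma}^{I,J}=\pm(\beta/(Wt))^{A_{q,I,J}}$, and because $-1$ is itself an $A_{q,I,J}$-th power (as $A_{q,I,J}$ is odd in odd characteristic, and trivially in characteristic two), this exhibits $K_{\alpha,\beta,\gamma}^{I,J}$ as an $A_{q,I,J}$-power in $\field$, a contradiction. Hence no non-trivial solution with $(a,b)\neq(0,0)$ exists and $\us$ is scattered.

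I expect the main obstacle to be the bookkeeping of the mixed Frobenius exponents in the cyclic substitution and, above all, recognising that the constant produced on the right-hand side equals $\beta^{A_{q,I,J}}/K_{\alpha,\beta,\gamma}^{I,J}$ up to an $A_{q,I,J}$-th power; this is precisely what makes the invariant $K_{\alpha,\beta,\gamma}^{I,J}=\alpha\gamma^{q^K}\beta^{q^K+1}$ and the exponent $A_{q,I,J}=q^{2K}+q^K+1$ emerge naturally rather than being imposed by hand. The degenerate-case analysis, although routine, has to be carried out carefully so that the definition of $t$ and the cancellations of $X,Y,Z$ are all legitimate.
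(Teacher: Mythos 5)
Your proposal is correct, and it shares the paper's overall skeleton (reduce scatteredness to the semilinear system $a\,x^{q^I}+\alpha b\,y^{q^J}=0$, $b\,x^{q^J}+\beta a\,z^{q^I}=0$, $a\,y^{q^I}+\gamma b\,z^{q^J}=0$ with $a=\lambda^{q^I}-\lambda$, $b=\lambda^{q^J}-\lambda$, then show a nontrivial solution makes $K_{\alpha,\beta,\gamma}^{I,J}$ an $A_{q,I,J}$-power), but the elimination at its core runs differently. The paper treats the three equations as a homogeneous linear system in the two unknowns $(a,b)$: a nonzero solution forces the $3\times2$ coefficient matrix to have rank at most one, so its $2\times2$ minors vanish, which eliminates $\lambda$ at once and leaves two equations in $(u,v,w)$ alone; a single normalization $z:=(w/u)^{q^I}$, $y:=(v/u)^{q^I}$ then yields $1-\alpha\gamma^{q^K}\beta^{q^K+1}z^{A_{q,I,J}}=0$ directly, with the exponent appearing as $(q^K+1)^2-q^K$ and no sign to manage. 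You instead keep the ratio $t=b/a$, solve cyclically ($X=-\alpha t\,Y^{q^K}$, $Y=-\gamma t\,Z^{q^K}$, $Z=-\beta^{-1}t\,X^{q^K}$), and close the cycle to get $X^{q^{3K}-1}=-\beta^{q^{2K}}/(\alpha\gamma^{q^K}t^{A_{q,I,J}})$; this route needs two extra ingredients the paper's avoids, namely the factorization $q^{3K}-1=(q^K-1)A_{q,I,J}$ and the observation that $-1$ is an $A_{q,I,J}$-th power (since $A_{q,I,J}$ is odd for odd $q$, and $-1=1$ in characteristic two), both of which you supply correctly; the $\lambda$-dependence via $t$ then harmlessly lands inside the $A_{q,I,J}$-th power. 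A side benefit of your write-up is that it makes explicit two points the paper only asserts: that $\gcd(I,J,n)=1$ is precisely what rules out $a=b=0$ for $\lambda\notin\mathbb{F}_q$ (via $\mathbb{F}_{q^{\gcd(I,n)}}\cap\mathbb{F}_{q^{\gcd(J,n)}}=\mathbb{F}_q$), and the case analysis showing a nontrivial solution must have all of $x,y,z$ nonzero, including the subcase where exactly one of $a,b$ vanishes. What the paper's version buys in exchange is economy: eliminating $\lambda$ first via the minors produces the invariant and the exponent in one step, with no sign analysis.
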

\begin{proof} Let $\lambda\in\mathbb{F}_{q^n}\setminus\mathbb{F}_q$ be such that
$$(x,y,z,\fax,\fbx,\fcx)=\lambda(u,v,w,\fa{u}{v},\fa{u}{w},\fa{v}{w}),$$
with $x,y,z,u,v,w\in\mathbb{F}_q$. $\us$ is scattered if and only if the equation holds for $u=v=w=0$.

By way of contradiction, we assume $(u,v,w)\neq (0,0,0)$. We have
$$\begin{cases}
    x=\lambda u\\
    y=\lambda v\\
    z=\lambda w\\
    \fax=\lambda(\fa{u}{v})\\
    \fbx=\lambda(\fb{u}{w})\\
    \fcx=\lambda(\fc{v}{w}).
\end{cases}$$
So we obtain
$$\begin{cases}
    (\lambda\qi -\lambda)u\qi+(\lambda\qj-\lambda)\alpha v\qj=0\\
    (\lambda\qi -\lambda)\beta w\qi+(\lambda\qj-\lambda) u\qj=0\\
    (\lambda\qi -\lambda)v\qi+(\lambda\qj-\lambda)\gamma w\qj=0.
\end{cases}$$
It is easy to see that a nontrivial solution $(u,v,w)$ must satisfy  $u v w\neq 0$.

We note that this is a linear system in the variables $(\lambda\qi -\lambda),(\lambda\qj-\lambda)$, and since $\lambda\not\in\mathbb{F}_q$ then $(\lambda\qi -\lambda)\neq 0$ or $(\lambda\qj -\lambda)\neq 0$. So this is a linear system of 3 equations in 2 variables which has a nonzero solution. This is possible if and only if
\begin{equation}
    \begin{cases}
    \gamma\beta w^{q^I+q^J}-v\qi u\qj=0\\
    \gamma u\qi w\qj-\alpha v^{q^I+q^J}=0.
\end{cases}\label{thm1.1}
\end{equation}
Letting $z:={w\qi}/{u\qi},y:={v\qi}/{u\qi}$ and dividing the equations of the system by $u^{q^I+q^J}$ we get
    $$
    \begin{cases}
        y=\gamma\beta z\qk\\
        \gamma z\qji-\alpha\gamma\qk\beta\qk z^{(q^{K}+1)^2}=0
    \end{cases}$$
    Since $\gamma,z\neq 0$, we obtain
    $$
    1-\alpha\gamma\qji\beta\qk z^{q^{2K}+q^{K}+1}=0,
    $$
    a contradiction to our hypothesis on $K_{\alpha ,\beta ,\gamma}^{I,J}$.
\end{proof}

\vspace{2mm}

We now prove that $\us$ is exceptional scattered. Denote $({q^{nm}-1})/({q^n-1})$ by $C_{n,m}$.
\begin{prop}\label{propTN} 
    Let $n\in\mathbb{N}$, let $A \in \mathbb{N}$ such that $\gcd(q,A)=1$, then there exist infinitely many $m \in \mathbb{N}$ such that $\gcd(A,C_{n,m})=1$.
\end{prop}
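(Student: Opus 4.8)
The plan is to reduce the coprimality condition to the level of individual prime divisors of $A$, and to show that for each prime $p\mid A$ the set of exponents $m$ with $p\mid C_{n,m}$ is precisely an arithmetic progression $\{m : t_p\mid m\}$ for a suitable integer $t_p\ge 2$. Simultaneously avoiding all these (finitely many) progressions then leaves an infinite set of admissible $m$.

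Write $Q:=q^n$, so that $C_{n,m}=(Q^m-1)/(Q-1)=1+Q+\cdots+Q^{m-1}$. Since $\gcd(A,q)=1$, every prime $p\mid A$ is coprime to $q$, hence to $Q$, so $Q$ is invertible modulo $p$ and the multiplicative order $d_p:=\mathrm{ord}_p(Q)$ is well defined. First I would fix such a prime $p$ and determine exactly when $p\mid C_{n,m}$, splitting into two cases. If $p\nmid Q-1$, then from $(Q-1)\,C_{n,m}=Q^m-1$ and the invertibility of $Q-1$ modulo $p$ one obtains $p\mid C_{n,m}\iff p\mid Q^m-1\iff d_p\mid m$; moreover $d_p\ge 2$ in this case, since $d_p=1$ would force $p\mid Q-1$. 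Setting $t_p:=d_p$ disposes of this case.

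The delicate case is $p\mid Q-1$, where the order argument degenerates because $Q\equiv 1\pmod p$ and the factor $(Q-1)$ can no longer be cancelled modulo $p$. Here I would instead evaluate the geometric sum directly: since $Q^j\equiv 1\pmod p$ for every $j$, we get $C_{n,m}\equiv m\pmod p$, and therefore $p\mid C_{n,m}\iff p\mid m$. Setting $t_p:=p\ge 2$ handles this case as well. In both cases we produce an integer $t_p\ge 2$ with the equivalence $p\mid C_{n,m}\iff t_p\mid m$.

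Finally I would patch the finitely many conditions together. Let $L:=\mathrm{lcm}\{t_p : p\mid A\}$ and take any $m$ with $m\equiv 1\pmod L$. For each prime $p\mid A$ we then have $t_p\mid L$ and $m\equiv 1\pmod{t_p}$, so $t_p\nmid m$ (because $t_p\ge 2$), whence $p\nmid C_{n,m}$. Thus no prime dividing $A$ divides $C_{n,m}$, i.e. $\gcd(A,C_{n,m})=1$, and this holds for every $m$ in the infinite residue class $m\equiv 1\pmod L$, giving infinitely many admissible $m$. The only genuinely nonroutine point in this argument is the treatment of the primes $p$ with $p\mid Q-1$, where the order-based criterion collapses and must be replaced by the congruence $C_{n,m}\equiv m\pmod p$.
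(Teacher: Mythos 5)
Your proof is correct, and it takes a genuinely different route from the paper's. You argue directly: setting $Q=q^n$, for each prime $p\mid A$ you characterize \emph{exactly} the bad exponents via the multiplicative order of $Q$ modulo $p$, namely $p\mid C_{n,m}\iff t_p\mid m$, with $t_p=\mathrm{ord}_p(Q)\ge 2$ when $p\nmid Q-1$ (cancelling the unit $Q-1$ in $(Q-1)C_{n,m}=Q^m-1$) and $t_p=p$ when $p\mid Q-1$ (via the congruence $C_{n,m}\equiv m\pmod p$), after which every $m\equiv 1\pmod{L}$ with $L=\mathrm{lcm}\{t_p\}$ avoids all the bad progressions at once. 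The paper instead argues by contradiction: if only finitely many $m$ were good, then beyond some threshold $\overline{m}$ every value $f(j)=C_{n,\overline{m}+j}$ would be divisible by one of the finitely many primes of $A$; using $f(j+1)=f(j)+q^{n(\overline{m}+j)}$ (so no prime divides two consecutive values) and the inductive identity $f(kj+(k-1)\overline{m})=f((k-1)j+(k-2)\overline{m})+q^{n(k-1)(\overline{m}+j)}f(j)$ (so divisibility by $p_i$ propagates along $j\mapsto k(j_i+\overline{m})-\overline{m}$), it manufactures the single index $\prod_i(j_i+\overline{m})-\overline{m}$ at which \emph{all} the primes divide, whence at the next index \emph{none} does --- a contradiction. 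The two arguments rest on the same phenomenon (for each $p$ the bad $m$ lie along an arithmetic progression), but you identify those progressions exactly, while the paper only propagates divisibility along them. Your version buys constructiveness and a quantitatively stronger conclusion: an explicit infinite arithmetic progression $m\equiv 1\pmod{L}$ of good exponents, hence positive density, rather than bare existence; the paper's version stays entirely within elementary manipulations of the geometric sums, never invoking multiplicative orders, at the cost of an indirect proof that exhibits no explicit good $m$. The one point in your write-up worth stating carefully is the case split itself --- your observation that the order criterion degenerates precisely when $p\mid Q-1$, and must there be replaced by $C_{n,m}\equiv m\pmod p$, is exactly what keeps the claim $t_p\ge 2$ true in all cases, and your handling of it is correct.
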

    \begin{proof}
    Consider the factorization of $A=p_1\cdot p_2\cdots p_N$. By the way of contradiction suppose that there are not infinitely many $m \in \mathbb{N}$ such that $\gcd(A,C_{n,m})=1$.
    Therefore, there must exist an $\overline{m}$ such that $$\forall j>0 \text{ there exists an } i_j\text{ such that }p_{i_j}|f(j):=\frac{q^{n(\overline{m}+j)}-1}{q^n-1}=1+q^n+\cdots+q^{n(\overline{m}-1+j)}.$$
We select the primes from the factorization of $A$ that divide at least one $f(j)$ and we denote them by $\{p_1, \ldots, p_M\}$.

We note that \begin{itemize}
    \item if $p_i|f(j)$ then $p_i\not |f(j+1)$ since $p_i\nmid q$;
    \item if $p_i|f(j)$ then $p_i \mid f(kj+(k-1)\overline{m})$ $\forall k>0$.
\end{itemize}
 We prove second property by induction:
 \begin{itemize}
     \item[-] $k=2$: Since $p_i \mid q^{n(\overline{m}+j)}(1+q^n+\dots+q^{n(\overline{m}-1+j)})$, we have that $p_i\mid f(2j+\overline{m})=1+q^n+\dots+q^{n({\overline{m}}-1+j)}+q^{n(\overline{m}+j)}+\cdots+q^{n(2\overline{m}-1+2j)}$.
\item[-]$P(k-1)\to P(k)$:
The proof is analogous to the base case noticing that $f(kj+(k-1)\overline{m})=f((k-1)j+(k-2)\overline{m})+q^{n(k-1)(\overline{m}+j)}f(j).$
 \end{itemize}

 Let $j_i>0$ such that $p_i|f(j_i)$ for $i=1,\dots,M$. This implies that $p_i \mid f(kj_i+(k-1)\overline{m})$ $\forall k>0$.
 Note that  $kj_i+(k-1)\overline{m}=k(j_i+\overline{m})-\overline{m}$ and thus   $p_i \mid f(\prod_{i=1}^M(j_i+\overline{m})-\overline{m})$, for each $ i=1,\dots,M$. So $p_i\nmid f(\prod_{i=1}^M(j_i+\overline{m})-\overline{m}+1)$ for each $i=1,\dots,M$, a contradiction.
 \end{proof}

\begin{cor} \label{cor1}
Assume that $\gcd(I,J,n)=1$ and that
$K_{\alpha ,\beta ,\gamma}^{I,J}$
is not an $A_{q,I,J}$-power in $\mathbb{F}_{q^n}$. Then the set $\us$ is exceptional scattered.
\end{cor}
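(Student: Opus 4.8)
The plan is to combine Theorem~\ref{thmorig1} with Proposition~\ref{propTN} to show that the scatteredness condition survives under infinitely many field extensions. Recall that $\us$ is defined over $\mathbb{F}_{q^n}$, and to say the sequence is \emph{exceptional scattered} means that the analogously defined set over $\mathbb{F}_{q^{n\ell}}$ remains scattered for infinitely many $\ell$. The crucial observation is that the defining parameters $I$, $J$ (and hence $K=J-I$) and the elements $\alpha,\beta,\gamma$ do not change when we pass to an extension field; only the ground field enlarges. So I would first check that Theorem~\ref{thmorig1} applies verbatim over each extension $\mathbb{F}_{q^{n\ell}}$ provided its two hypotheses continue to hold there, namely (i) $\gcd(I,J,n\ell)=1$ and (ii) $K_{\alpha,\beta,\gamma}^{I,J}$ is not an $A_{q,I,J}$-power in $\mathbb{F}_{q^{n\ell}}$.

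Next I would address hypothesis (i). Since $\gcd(I,J,n)=1$ by assumption, one cannot in general expect $\gcd(I,J,n\ell)=1$ for all $\ell$, but it suffices to restrict attention to those $\ell$ coprime to the relevant quantities; more carefully, the relevant condition in the proof of Theorem~\ref{thmorig1} is really that $\lambda\notin\mathbb{F}_q$ forces a genuine contradiction, and the scatteredness argument there only used $\gcd(I,J,n)=1$ to guarantee the Frobenius-type exponents behave correctly. I would therefore select $\ell$ so that $\gcd(I,J,n\ell)=1$ still holds; choosing $\ell$ coprime to $I$ and $J$ accomplishes this, and there are infinitely many such $\ell$.

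The heart of the argument, and the main obstacle, is hypothesis (ii): I must show that $K_{\alpha,\beta,\gamma}^{I,J}$ remains a non-$A_{q,I,J}$-power after extending the field. An element $\kappa\in\mathbb{F}_{q^n}$ is an $A$-power in $\mathbb{F}_{q^{n\ell}}$ precisely when $\kappa$ lies in the image of the map $t\mapsto t^{A}$ on $\mathbb{F}_{q^{n\ell}}^*$; equivalently, writing $A=A_{q,I,J}$, the element $\kappa$ is an $A$-power in the cyclic group $\mathbb{F}_{q^{n\ell}}^*$ of order $q^{n\ell}-1$ if and only if $\kappa^{(q^{n\ell}-1)/\gcd(A,\,q^{n\ell}-1)}=1$. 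The plan is to reduce the persistence of non-$A$-powerness to a coprimality statement: if $\gcd(A,\,q^{n\ell}-1)=\gcd(A,\,q^n-1)$, then being an $A$-power in $\mathbb{F}_{q^{n\ell}}$ imposes exactly the same order condition on $\kappa$ as being an $A$-power in $\mathbb{F}_{q^n}$ does, because $\kappa\in\mathbb{F}_{q^n}$ forces $\kappa^{(q^{n}-1)}=1$ and the surviving constraint is governed by $\gcd(A,q^n-1)$. In the cleanest case one arranges $\gcd(A, q^{n\ell}-1)=1$, whereby \emph{every} element is an $A$-power and the hypothesis would fail; so instead I want $\gcd(A,q^{n\ell}-1)$ to equal $\gcd(A,q^n-1)>1$, preserving the obstruction.

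This is exactly where Proposition~\ref{propTN} enters. Since $\gcd(q,A)=1$ (as $A=q^{2K}+q^K+1\equiv 1\pmod q$), the proposition produces infinitely many $m$ with $\gcd(A, C_{n,m})=1$, where $C_{n,m}=(q^{nm}-1)/(q^n-1)$. Writing $q^{nm}-1=(q^n-1)\,C_{n,m}$, this coprimality gives $\gcd(A,q^{nm}-1)=\gcd(A,q^n-1)$, so the set of $A$-powers in $\mathbb{F}_{q^{nm}}$ meets $\mathbb{F}_{q^n}$ in exactly the set of $A$-powers in $\mathbb{F}_{q^n}$. Consequently $K_{\alpha,\beta,\gamma}^{I,J}$, being a non-$A$-power in $\mathbb{F}_{q^n}$, remains a non-$A$-power in $\mathbb{F}_{q^{nm}}$ for each such $m$. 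I would then intersect this infinite family of admissible $m$ with the (cofinite, or at least infinite) set of $\ell=m$ satisfying $\gcd(I,J,nm)=1$, and conclude that both hypotheses of Theorem~\ref{thmorig1} hold over $\mathbb{F}_{q^{nm}}$ for infinitely many $m$. Applying the theorem over each such extension shows $\us$ is scattered over infinitely many $\mathbb{F}_{q^{nm}}$, which is precisely the definition of exceptional scattered, completing the proof.
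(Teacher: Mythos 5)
Your proposal has the same skeleton as the paper's proof: invoke Proposition~\ref{propTN} (legitimately, since $A_{q,I,J}=q^{2K}+q^{K}+1\equiv 1\pmod{p}$ is coprime to $q$) to obtain infinitely many $m$ with $\gcd(A_{q,I,J},C_{n,m})=1$, show that this coprimality forces $K_{\alpha,\beta,\gamma}^{I,J}$ to remain a non-$A_{q,I,J}$-power in $\mathbb{F}_{q^{nm}}$, and then apply Theorem~\ref{thmorig1} over each such extension. Your persistence step is a correct reformulation of the paper's: you deduce $\gcd(A_{q,I,J},q^{nm}-1)=\gcd(A_{q,I,J},q^{n}-1)$ from $q^{nm}-1=(q^{n}-1)C_{n,m}$, whereas the paper takes B\'ezout coefficients $c_1A_{q,I,J}+c_2C_{n,m}=1$ and shows directly that any $\xi\in\mathbb{F}_{q^{nm}}$ with $\xi^{A_{q,I,J}}=K_{\alpha,\beta,\gamma}^{I,J}\in\mathbb{F}_{q^{n}}$ satisfies $\xi^{q^{n}-1}=\bigl(\xi^{q^{n}-1}\bigr)^{c_1A_{q,I,J}}\bigl(\xi^{q^{nm}-1}\bigr)^{c_2}=1$, hence $\xi\in\mathbb{F}_{q^{n}}$. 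These are two proofs of the same cyclic-group fact. (One pedantic caveat: the equality of the two gcds alone does not give the ``same order condition''; you also need $\gcd\bigl(\gcd(A_{q,I,J},q^{n}-1),\,C_{n,m}\bigr)=1$, which fortunately is immediate from the hypothesis $\gcd(A_{q,I,J},C_{n,m})=1$ that you are already using.)

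The one real divergence is your treatment of hypothesis (i). You are right that applying Theorem~\ref{thmorig1} over $\mathbb{F}_{q^{nm}}$ requires $\gcd(I,J,nm)=1$ --- a point the paper's proof passes over in silence, and not a mere artifact of the proof: if $d=\gcd(I,J,nm)>1$, then $\us$ is closed under multiplication by $\mathbb{F}_{q^{d}}$ and cannot be scattered. However, your fix is incomplete: when $\gcd(I,J)>1$ the set of admissible $m$ is not cofinite, and the intersection of two infinite sets of integers can be finite or empty; Proposition~\ref{propTN} as stated gives no structural control over its set of good $m$'s, so ``intersect the two infinite families'' is an assertion, not an argument. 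The gap is repairable: every prime $\ell>\max(A_{q,I,J},J)$ satisfies both conditions simultaneously. Indeed, a prime divisor of $\gcd(I,J,n\ell)$ cannot divide $n$ (as $\gcd(I,J,n)=1$), so it would equal $\ell>J>I$, which is impossible; and for any prime $p'\mid A_{q,I,J}$, either $p'\mid q^{n}-1$, in which case $C_{n,\ell}\equiv\ell\pmod{p'}$ and $p'\le A_{q,I,J}<\ell$ give $p'\nmid C_{n,\ell}$, or $e:=\mathrm{ord}_{p'}(q)$ satisfies $e\mid 3K$ and $e\nmid n$, and then $e\le 3K<A_{q,I,J}<\ell$ shows $e\mid n\ell$ would force $e\mid n$, a contradiction. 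Hence $\gcd(A_{q,I,J},C_{n,\ell})=1$ and $\gcd(I,J,n\ell)=1$ for all such primes $\ell$, which closes your argument (and, incidentally, the same lacuna in the paper's).
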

\begin{proof}
    From the previous proposition, there exist infinitely many $m\in\mathbb{N}$ such that 
    $$\gcd(A_{q,I,J},C_{n,m})=1.$$ 
    Let us consider a fixed $m$ satisfying the above property. By Bézout's identity, there exist integers $c_1$ and $c_2$ such that $c_1A_{q,I,J}+c_2C_{n,m}=1$. Suppose by the way of contradiction that there exists  $\xi \in \mathbb{F}_{q^{mn}}\setminus\mathbb{F}_{q^n}$ such that $K_{\alpha ,\beta ,\gamma}^{I,J}=\xi^{A_{q,I,J}}$. So $\xi^{A_{q,I,J}}\in\mathbb{F}_{q^n}$, and so $1=(\xi^{A_{q,I,J}})^{q^n-1}=(\xi^{q^n-1})^{A_{q,I,J}}$.

Raising both sides to the power of $c_1$, we obtain $$1=(\xi^{q^n-1})^{c_1A_{q,I,J}}=(\xi^{q^n-1})^{-c_2C_{n,m}}(\xi^{q^n-1})=\xi^{q^n-1},$$
a contradiction to $\xi \notin \mathbb{F}_{q^n}$. 

Therefore, there are infinitely many extensions of $\mathbb{F}_q$ where $K_{\alpha ,\beta ,\gamma}^{I,J}$ is not an $A_{q,I,J}$-power, and by  Theorem \ref{thmorig1} the claim follows.
\end{proof}

The evasiveness property will be crucial to prove the indecomposability of $\us$.
\begin{theorem}
    If $K_{\alpha ,\beta ,\gamma}^{I,J}$ is not an $A_{q,I,J}$-power in $\mathbb{F}_{q^n}$ then $\us$ is $(2,2J)_q$-evasive.
\end{theorem}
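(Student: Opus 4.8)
The plan is to establish the defining inequality of $(2,2J)_q$-evasiveness directly: for every $2$-dimensional $\mathbb{F}_{q^n}$-subspace $W$ of $V(6,q^n)$ we must show $\dim_{\mathbb{F}_q}(\us\cap W)\le 2J$. First I would fix $W=\langle P,Q\rangle_{\mathbb{F}_{q^n}}$ with $P=(P_1,\dots,P_6)$, $Q=(Q_1,\dots,Q_6)$, and parametrize the intersection. A point of $\us$ lies in $W$ exactly when it equals $sP+tQ$ for some $s,t\in\mathbb{F}_{q^n}$; matching the first three coordinates forces $x=sP_1+tQ_1$, $y=sP_2+tQ_2$, $z=sP_3+tQ_3$, and substituting these into the last three coordinates produces three $\mathbb{F}_q$-linearized polynomial equations $E_1,E_2,E_3\in\mathbb{F}_{q^n}[s,t]$, namely $(sP_1+tQ_1)^{q^I}+\alpha(sP_2+tQ_2)^{q^J}=sP_4+tQ_4$ and its two analogues. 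Since only the exponents $1,q^I,q^J$ occur, each $E_i$ has total degree at most $q^J$. As $(s,t)\mapsto sP+tQ$ is injective, it identifies the common solution set $S:=V(E_1,E_2,E_3)\subseteq\mathbb{F}_{q^n}^2$ bijectively with $\us\cap W$; because the $E_i$ are additive, $S$ is an $\mathbb{F}_q$-subspace, so $|\us\cap W|=|S|=q^{\dim_{\mathbb{F}_q}S}$ and it suffices to prove $|S|\le q^{2J}$.

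The core of the argument is Bézout's Theorem. Observe first that since $P,Q$ are $\mathbb{F}_{q^n}$-independent, at most one of $E_1,E_2,E_3$ can vanish identically (the vanishing of two of them would force $P$ and $Q$ to agree, up to a single nonzero coordinate, and hence be dependent), so at least two of the curves $E_i=0$ are genuine plane curves of degree at most $q^J$. If two of them, say $E_1=0$ and $E_2=0$, share no common component over $\overline{\mathbb{F}}_q$, then after homogenizing in $\mathbb{P}^2(\overline{\mathbb{F}}_q)$ they meet in at most $(\deg E_1)(\deg E_2)\le q^{J}\cdot q^{J}=q^{2J}$ points. Every point of $S$ is among these intersection points, whence $|S|\le q^{2J}$ and $\dim_{\mathbb{F}_q}(\us\cap W)\le 2J$. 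Note that the bound $2J$ is produced entirely by the degrees $q^J$ of the two curves and is independent of the exponent $A_{q,I,J}=q^{2K}+q^{K}+1$ appearing in the hypothesis.

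Consequently the entire statement reduces to excluding a common one-dimensional component, and this is the main obstacle. The plan is to assume, for contradiction, that $V(E_1,E_2,E_3)$ is positive-dimensional, and to extract from a common component a nonzero ratio $z:=w^{q^I}/u^{q^I}$ (in the notation of the proof of Theorem~\ref{thmorig1}) satisfying the same pair of relations \eqref{thm1.1}; eliminating exactly as in that proof yields a nonzero $z$ with $1=\alpha\gamma^{q^{J-I}}\beta^{q^{K}}z^{q^{2K}+q^{K}+1}$, i.e. it exhibits $K_{\alpha ,\beta ,\gamma}^{I,J}$ as an $A_{q,I,J}$-power, contradicting the hypothesis. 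The delicate point is that additive polynomials are highly reducible over $\overline{\mathbb{F}}_q$ (for instance $s^{q^{K}}-t^{q^{K}}=(s-t)^{q^{K}}$), so one cannot simply invoke coprimality of two generic curves of degree $q^J$; the real work is to show that \emph{any} shared component is forced to be defined by the structural relations among the $P_i,Q_i,\alpha,\beta,\gamma$ and thereby to reproduce \eqref{thm1.1}. I expect this coprimality analysis — together with the bookkeeping of the finitely many degenerate configurations of $W$ for which one of the $E_i$ vanishes identically — to be the technically hardest part, whereas the passage from a coprime pair to the bound $2J$ is a direct application of Bézout.
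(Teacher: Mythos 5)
Your framework is sound and in outline matches the paper's: parametrize the intersection with a $2$-dimensional space, turn membership in $\us$ into three affine plane curves of degree at most $q^J$, and get the bound $q^{2J}$ from Bézout applied to a coprime pair; your observation that at most one of $E_1,E_2,E_3$ can vanish identically is also correct (two vanishing would force ten of the twelve coordinates of $P,Q$ to be zero). But the proposal stops exactly where the content of the theorem begins: the exclusion of common components is not proved, only announced as ``the technically hardest part,'' and the sketch of how the hypothesis would enter misidentifies the mechanism. Specialize your $P,Q$ to two vectors of $\us$ whose first three coordinates are proportional, $(a,b,c)=\lambda(x,y,z)$ with $\lambda\notin\mathbb{F}_q$ (the genuinely dangerous configuration). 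Then your curves take the form $E_1=x^{q^I}F+\alpha y^{q^J}G$, $E_2=\beta z^{q^I}F+x^{q^J}G$, $E_3=y^{q^I}F+\gamma z^{q^J}G$, where $F=s^{q^I}-s+\lambda^{q^I}(t^{q^I}-t)$ and $G=s^{q^J}-s+\lambda^{q^J}(t^{q^J}-t)$. The failure mode is not a shared component from which one extracts ``a nonzero ratio $w^{q^I}/u^{q^I}$'' of its points; it is the collapse of the three $E_i$ to pairwise proportional polynomials, which happens precisely when the three $2\times 2$ coefficient determinants $x^{q^I+q^J}-\alpha\beta y^{q^J}z^{q^I}$, $\beta\gamma z^{q^I+q^J}-x^{q^J}y^{q^I}$, $\gamma x^{q^I}z^{q^J}-\alpha y^{q^I+q^J}$ all vanish --- that is, the hypothesis on $K_{\alpha,\beta,\gamma}^{I,J}$ constrains the coordinates $(x,y,z)$ of the \emph{spanning vectors} through the system \eqref{thm1.1}, eliminated as in the proof of Theorem \ref{thmorig1}, not any quantity attached to points of a common component. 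If this collapse is not excluded, $V(E_1,E_2,E_3)$ is a single curve of degree at most $q^J$, which may carry on the order of $q^{n+J}$ points rational over $\mathbb{F}_{q^n}$ --- far above $q^{2J}$ since $J<n$ --- so no Bézout-type argument can save you there. The remaining intermediate configurations (exactly one or two determinants vanishing, some of $x,y,z$ zero, $\lambda^{q^I}=\lambda^{q^J}$, one $E_i\equiv 0$) each need separate treatment; that case analysis \emph{is} the proof, and it is missing.

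For comparison, the paper distributes the work differently and uses the hypothesis only in the degenerate case. When the first three coordinates of $h_1,h_2$ are not proportional ($xb-ya\neq 0$), it eliminates $w$ linearly and obtains two curves in $(u,v)$ whose projective closures meet the line at infinity in the distinct single points $[1:0:0]$ and $[0:1:0]$; coprimality is then automatic and Bézout yields $q^{2J}$ with no use of the power condition at all. When $(a,b,c)=\lambda(x,y,z)$, the problem reduces to a univariate polynomial in $\mu$ of degree at most $q^J$, and the hypothesis is invoked exactly once, to show its coefficients cannot all vanish (equations (\ref{eq2.1.2}), (\ref{eq2.1.4}), (\ref{eq2.1.6}), which reproduce \eqref{thm1.1}). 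Your plan instead expects the hypothesis to power a coprimality argument in general position --- it does not --- and defers the one step for which it is genuinely needed, so as it stands the proposal is an incomplete reduction rather than a proof.
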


    \begin{proof} Let 
    \begin{eqnarray*}
    h_1&:=&(x,y,z,\fax,\fbx,\fcx),\\
     h_2&:=&(a,b,c,\fa{a}{b},\fb{a}{c},\fc{b}{c})
     \end{eqnarray*}
   be two $\mathbb{F}_q$-independent vectors of $\us$. A vector $$h_3:=(u,v,w,\fa{u}{v},\fb{u}{w},\fc{v}{w})\in \us $$ belongs to  $\langle{h}_1,{h}_2\rangle$ if and only if the following matrix has rank 2
   $$M:=\begin{pmatrix}
   x&y&z&\fax&\fbx&\fcx\\
   a&b&c&\fa{a}{b}&\fb{a}{c}&\fc{b}{c}\\
   u&v&w&\fa{u}{v}&\fb{u}{w}&\fc{v}{w}
   \end{pmatrix}.
   $$
In what follows we determine the number of $(u,v,w)\in\mathbb{F}_{q^n}^3$ such that $\textrm{rank}(M)=2$.    We will divide the proof into several cases.

\textbf{Case 1.} $xb-ya\neq 0$ 
    
    Assuming that the matrix $M$ has rank 2,  all the $3\times3$ minors  are singular, in particular the ones containing the submatrix $\begin{pmatrix} 
    x&y\\
    a&b \end{pmatrix}$.\\
    So we obtain$$
    \begin{cases}
        u(yc-bz)-v(xc-az)+w(xb-ya)=0\vspace{2mm}\\
        u\begin{vmatrix} y&\fax\\b&\fa{a}{b}\end{vmatrix}-v\begin{vmatrix} x&\fax\\a&\fa{a}{b}\end{vmatrix}+(\fa{u}{v})(xb-ya)=0\vspace{2mm}\\
        u\begin{vmatrix} y&\fbx\\b&\fb{a}{c}\end{vmatrix}-v\begin{vmatrix} x&\fbx\\a&\fb{a}{c}\end{vmatrix}+(\fb{u}{w})(xb-ya)=0\vspace{2mm}\\
        u\begin{vmatrix} y&\fcx\\b&\fc{b}{c}\end{vmatrix}-v\begin{vmatrix} x&\fcx\\a&\fc{b}{c}\end{vmatrix}+(\fc{v}{w})(xb-ya)=0.
    \end{cases}
$$ Dividing by $(xb-ya)$
$$\begin{cases}
    w=vB_1-uA_1\\
    uA_2-vB_2+\fa{u}{v}=0\\
    uA_3-vB_3+u\qj+\beta B_1\qi v\qi-\beta A_1\qi u\qi=0\\
    uA_4-vB_4+\beta B_1\qj v\qj-\beta A_1\qj u\qj=0.
    \end{cases}
$$ 
We notice that we have defined three plane curves
\begin{eqnarray*}
\chi_1&:&u\qi+\alpha v\qj+A_2u-B_2v=0, \\
 \chi_2&:&u\qj+\beta B_1\qi v\qi-\beta A_1\qi u\qi+A_3u-B_3v=0,\\
 \chi_3&:&v\qi+\beta B_1\qj v\qj-\beta A_1\qj u\qj
+A_4u-B_4v=0 \end{eqnarray*}

with coordinates $(u,v,w)$. We can estimate the number of solutions of the previous system by estimating the number of intersections of such curves. Our aim is to use Bézout's theorem, so we need to check common components. 
First we show that $\chi_1$ and $\chi_2$ have no common components. Obviously, if they had a common component, they would also have a common point at infinity, but we see that this is not the case, since \mbox{$\overline{\chi_1}\cap r=\{[1:0:0]\}$} and $\overline{\chi_2}\cap r=\{[0:1:0]\}$, where $r$ is the line at infinity. So we can apply Bézout's theorem on $\chi_1$ and $\chi_2$, so the number of solutions of our system is at most $q^{2J}$. 

Note that $yc-bz\neq 0$ or $xc-az\neq 0$ yield the same result due to the symmetry of the problem. So we can consider this case

\textbf{Case 2.} $xb-ya=yc-bz=xc-az=0$

These equations yield the existence of $\lambda\in\mathbb{F}_{q^n}$ such that $(a,b,c)=\lambda(x,y,z)$.
We notice that $\lambda\not\in\mathbb{F}_q$ because otherwise $h_1$ and $h_2$ would not be independent.
Since $h_3$ is a linear combination of $h_2$ and $h_1$, specifically $(u,v,w)$ is a combination of $(x,y,z)$ and $(a,b,c)=(\lambda x,\lambda y,\lambda z)$. So, there exists $\mu\in\mathbb{F}_{q^n}$ such that $(u,v,w)=\mu (x,y,z)$.
Thus, we need to count how many $\mu\in\mathbb{F}_{q^n}$ make
$$M_2:=\begin{pmatrix}
    x&y&z&\fax&\fbx&\fcx\\
   \lambda x&\lambda y&\lambda z&\faxl&\fbxl&\fcxl\\
   \mu x&\mu y&\mu z&\mu\qi x\qi +\alpha\mu\qj y\qj&\mu\qj x\qj +\beta\mu\qi z\qi&\mu\qi y\qi +\gamma\mu\qj z\qj
\end{pmatrix}$$
a rank-2 matrix.

Clearly, one among $x,y,z$ must be non-zero. Without loss of generality, given the symmetry of the problem, we suppose that  $x\neq 0$. 

\textbf{Case 2.1.} $\lambda\qi\neq\lambda\qj$

Assuming that the matrix M has rank 2, then all the $3\times3$ submatrices  are singular, in particular the ones containing the column $(x,\lambda x,\mu x)$.
$$\begin{pmatrix}
   \marktopleft{c2}\hspace{2mm}x\hspace{2mm}&y&z&\marktopleft{c1}\hspace{6mm}\fax\hspace{6mm}&\fbx&\fcx\\
   \lambda x&\lambda y&\lambda z&\faxl&\fbxl&\fcxl\\
   \mu x\markbottomright{c2}&\mu y&\mu z&\mu\qi x\qi +\alpha\mu\qj y\qj&\mu\qj x\qj +\beta\mu\qi z\qi\markbottomright{c1}&\mu\qi y\qi +\gamma\mu\qj z\qj.
    
\end{pmatrix}$$
By imposing the condition that the highlighted minor is singular, we obtain an equation of the form \begin{equation}
    \mu x \begin{vmatrix}
    \fax&\fbx\\
    \faxl&\fbxl
\end{vmatrix}+K_1\mu\qi+K_2\mu\qj=0 \label{eq2.1.1}\end{equation}

The coefficient of $\mu$ is zero if and only 
\begin{equation}
    x^{q^I+q^J}-\alpha\beta y\qj z\qi=0\label{eq2.1.2}.
\end{equation}
Now, making analogous considerations on the submatrix 
$$\begin{pmatrix}
   \marktopleft{c2}\hspace{2mm}x\hspace{2mm}&y&z&\hspace{6mm}\fax\hspace{6mm}&\marktopleft{c1}\hspace{6mm}\fbx\hspace{6mm}&\fcx\\
   \lambda x&\lambda y&\lambda z&\faxl&\fbxl&\fcxl\\
   \mu x\markbottomright{c2}&\mu y&\mu z&\mu\qi x\qi +\alpha\mu\qj y\qj&\mu\qj x\qj +\beta\mu\qi z\qi&\mu\qi y\qi +\gamma\mu\qj z\qj\markbottomright{c1}
\end{pmatrix}$$
we obtain an equation of the form \begin{equation}
    \mu x \begin{vmatrix}
    \fbx&\fcx\\
    \fbxl&\fcxl
\end{vmatrix}+K_3\mu\qi+K_4\mu\qj=0 \label{eq2.1.3}.\end{equation}
In this case, the coefficient of $\mu$ is zero if and only \begin{equation}
   \beta\gamma z^{q^I+q^J}-x\qj y\qi=0.\label{eq2.1.4}
\end{equation}
Finally, considering the submatrix
$$\begin{pmatrix}
  \marktopleft{c2}\hspace{2mm}x\hspace{2mm}&y&z&\marktopleft{c3}\hspace{6mm}\fax\hspace{6mm}&\hspace{6mm}\fbx\hspace{6mm}&\marktopleft{c1}\hspace{6mm}\fcx\hspace{6mm}\\
   \lambda x&\lambda y&\lambda z&\faxl&\fbxl&\fcxl\\
   \mu x\markbottomright{c2}&\mu y&\mu z&\mu\qi x\qi +\alpha\mu\qj y\qj\markbottomright{c3}&\mu\qj x\qj +\beta\mu\qi z\qi&\mu\qi y\qi +\gamma\mu\qj z\qj\markbottomright{c1}
\end{pmatrix},$$
by arguing as above 
 \begin{equation}
    \mu x \begin{vmatrix}
    \fax&\fcx\\
    \faxl&\fcxl
\end{vmatrix}+K_5\mu\qi+K_6\mu\qj=0 \label{eq2.1.5}.\end{equation}
The coefficient of $\mu$ is zero if and only \begin{equation}
   \gamma x\qi z\qj-\alpha y^{q^I+q^J}=0.\label{eq2.1.6}
\end{equation}
We notice that if at least one of the three coefficients of $\mu$ were non-zero, then one of (\ref{eq2.1.1}), (\ref{eq2.1.3}), (\ref{eq2.1.5}) would be a non-zero polynomial in $\mu$, of degree at most $q^{J}$, and hence it will have no more than $q^{J}$ solutions. If all three coefficients were zero, then from (\ref{eq2.1.2}) we obtain that $y\neq0\neq z$ since $x\neq0$. Also,  by (\ref{eq2.1.4}) and (\ref{eq2.1.6}) 
\begin{equation}
    \begin{cases}
     \beta\gamma z^{q^I+q^J}-x\qj y\qi=0\\
      \gamma x\qi z\qj-\alpha y^{q^I+q^J}=0
         \end{cases}.\label{sys2.1.1}
\end{equation}
Arguing as in the proof of Theorem \ref{thmorig1}, we get a contradiction. 

\textbf{Case 2.2.} $\lambda\qi=\lambda\qj$
We need to consider the following matrix
$$M_3:=\begin{pmatrix}
    x&y&z&\fax&\fbx&\fcx\\
   \lambda x&\lambda y&\lambda z&\lambda\qi x\qi +\alpha\lambda\qi y\qj&\lambda\qi x\qj +\beta\lambda\qi z\qi&\lambda\qi y\qi +\gamma\lambda\qi z\qj\\
   \mu x&\mu y&\mu z&\mu\qi x\qi +\alpha\mu\qj y\qj&\mu\qj x\qj +\beta\mu\qi z\qi&\mu\qi y\qi +\gamma\mu\qj z\qj
    
\end{pmatrix}.$$
Note that $M_3$ has rank two only if 
{\footnotesize$$
\begin{cases}
         -x(\lambda\qi-\lambda)(\mu\qi x\qi+\alpha\mu\qj y\qj)(\fbx)+x(\lambda\qi-\lambda)(\mu\qj x\qj +\beta\mu\qi z\qi)(\fax)=0\\
         -x(\lambda\qi-\lambda)(\mu\qj x\qj+\beta\mu\qi z\qi)(\fcx)+x(\lambda\qi-\lambda)(\mu\qi y\qi+\gamma\mu\qj z\qj)(\fbx)=0\\
          -x(\lambda\qi-\lambda)(\mu\qi x\qi+\alpha\mu\qj y\qj)(\fcx)+x(\lambda\qi-\lambda)(\mu\qi y\qi+\gamma\mu\qj z\qj)(\fax)=0.
\end{cases}
$$}
And thus, we obtain
$$\begin{cases}
    (\mu\qj-\mu\qi)(x^{q^I+q^J}-\alpha\beta y\qj z\qi)=0\\
    (\mu\qj-\mu\qi)(\beta\gamma z^{q^I+q^J}-x\qj y\qi)=0\\
    (\mu\qj-\mu\qi)(\gamma x\qi z\qj-\alpha y^{q^I+q^J})=0
\end{cases}$$
At least one among the above three equations cannot vanish, otherwise $x^{q^I+q^J}-\alpha\beta y\qj z\qi=\beta\gamma z^{q^I+q^J}-x\qj y\qi=\gamma x\qi z\qj-\alpha y^{q^I+q^J}=0$ would yield a contradiction, arguing as before. 
Therefore the above system has at most $q^{J} $ solutions, the claim follows.
\end{proof}

\begin{theorem}
     If $K_{\alpha ,\beta ,\gamma}^{I,J}$ is not an $A_{q,I,J}$-power than $\us$ is $(3,n+2J)_q$-evasive.
\end{theorem}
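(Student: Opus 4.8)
The plan is to prove $(3,n+2J)_q$-evasiveness, meaning that for any $3$-dimensional $\mathbb{F}_{q^n}$-subspace $W$ of $V(6,q^n)$, the intersection $W\cap \us$ has $\mathbb{F}_q$-dimension at most $n+2J$; equivalently, $|W\cap \us|\leq q^{n+2J}$. By definition of $\us$, the vectors are parametrized by $(x,y,z)\in\mathbb{F}_{q^n}^3$ (here read over the extension field in the exceptional setting), so we must bound the number of triples $(x,y,z)$ whose associated vector $(x,y,z,\fax,\fbx,\fcx)$ lies in a fixed $3$-dimensional $\mathbb{F}_{q^n}$-space $W$. I would begin by fixing a spanning set $h_1,h_2,h_3$ of $W$ coming from parameter triples $(x_i,y_i,z_i)$, $i=1,2,3$ (if $W\cap\us$ spans a space of dimension less than $3$, the previous theorem's bound $q^{2J}\leq q^{n+2J}$ already suffices, so we may assume three independent elements of $\us$ inside $W$). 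A generic point $(u,v,w)$ contributes to the intersection precisely when the $4\times 6$ matrix with rows $h_1,h_2,h_3$ and $h_4=(u,v,w,\fa{u}{v},\fb{u}{w},\fc{v}{w})$ has rank $3$, i.e. all its $4\times 4$ minors vanish.

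The key step is to mimic the structure of the preceding $(2,2J)_q$-evasiveness proof, but one dimension up. First I would use the three left-most columns: the submatrix formed by columns $1,2,3$ of $h_1,h_2,h_3$ has some rank $\rho\in\{1,2,3\}$. When $\rho=3$, the $(u,v,w)$ is determined by the first three coordinates via Cramer-type relations expressing $u,v,w$ as fixed $\mathbb{F}_{q^n}$-linear combinations of the entries of $h_4$, and substituting into the $4\times 4$ minors involving one of the nonlinear columns yields a single nontrivial additive ($q$-linearized) equation in one variable of $q$-degree bounded by $q^J$, giving at most $q^J\leq q^{n+2J}$ solutions. The harder regimes are $\rho=2$ and $\rho=1$, where the first three columns are degenerate. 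For $\rho=1$ the triples $(x_i,y_i,z_i)$ are all $\mathbb{F}_{q^n}$-scalar multiples of a single triple, and I would reduce, exactly as in Case 2 of the previous theorem, to counting scalars $\mu$ making a small matrix rank-deficient; the contradiction-via-Theorem-\ref{thmorig1} argument rules out the vanishing of all coefficients, so one gets at most $q^J$ or so contributions, well within the bound. For $\rho=2$ the genuinely new and dominant case appears: the first three coordinates of $(u,v,w)$ lie in a fixed $2$-dimensional $\mathbb{F}_{q^n}$-space (accounting for a factor $q^{2n}$ worth of freedom a priori, but constrained further), and imposing vanishing of the minors built from the nonlinear columns produces plane-curve conditions analogous to $\chi_1,\chi_2,\chi_3$.

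The main obstacle will be the bookkeeping in the $\rho=2$ case: I must isolate a pair of the curves $\chi_i$ with no common component (checking their points at infinity on the line $r$, as was done for $\chi_1,\chi_2$) so that Bézout gives a bound of order $q^{2J}$ on the ``free'' part, and then multiply by the $q^n$ coming from the one-dimensional family of admissible $(u,v,w)$ not pinned down by the first coordinates, to land at $q^{n}\cdot q^{2J}=q^{n+2J}$. Concretely, I would parametrize the relevant $(u,v,w)$ as a coset of a line in $V(3,q^n)$, write the nonlinear minor conditions along this line, and show they cut out at most $q^{2J}$ values after the $q^n$ linear degrees of freedom are removed; the hypothesis that $K_{\alpha,\beta,\gamma}^{I,J}$ is not an $A_{q,I,J}$-power enters exactly where the associated curves would otherwise share a component, forcing the relevant resultant to be nonzero. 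I expect the delicate point to be verifying that no spurious common component of the curves arises in the degenerate $\rho=2$ configuration, since that is what separates the sharp $q^{n+2J}$ bound from a weaker estimate; the affine/infinity analysis of the curves $\chi_i$, together with the already-established scattered property, should close this gap.
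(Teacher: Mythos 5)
Your skeleton matches the paper's proof — you case-split on the rank $\rho\in\{1,2,3\}$ of the matrix of first three coordinates of $h_1,h_2,h_3$, exactly as the paper's Cases 1, 2.1, 2.2, and your preliminary reduction (assume three $\mathbb{F}_{q^n}$-independent vectors, else the $(2,2J)_q$-evasive bound already suffices) is sound. But both nontrivial cases contain genuine gaps. In the $\rho=3$ case, your claim that Cramer-type substitution yields ``a single nontrivial additive equation in one variable'' with at most $q^J$ solutions is wrong: $u,v,w$ remain three free variables, and each $4\times 4$ minor gives one $q$-linearized equation in all three of them. A single such equation of degree $q^J$ only bounds the solution set by $q^{2n+J}$, which \emph{exceeds} $q^{n+2J}$ since $J<n$, so your stated mechanism does not close this case. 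The paper instead keeps all three equations, regards them as hypersurfaces $\chi_1,\chi_2,\chi_3$ of degree $q^J$ in $\mathbb{P}^3$, checks that their intersections with the plane at infinity are three nonconcurrent lines (hence no common component, since any common curve would meet the plane at infinity in a point on all three lines), and applies Bézout to get $q^{3J}\leq q^{n+2J}$.

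In the $\rho=2$ case your accounting is incoherent and the plan is not achievable. Writing $(u,v,w)=\mu_1(x,y,z)+\mu_2(a,b,c)$ already reduces everything to the two parameters $(\mu_1,\mu_2)\in\mathbb{F}_{q^n}^2$: there is no residual ``one-dimensional family'' supplying an extra factor $q^n$ on top of a Bézout count, and conversely, if you could find two curves in the $(\mu_1,\mu_2)$-plane without common components, Bézout would give $q^{2J}$ total, not $q^{n+2J}$. The actual difficulty is that a \emph{second} independent nonvanishing equation need not exist; the paper uses only one. Concretely: by Kronecker's theorem at least one of the $3\times 3$ minors $K_1,K_2,K_3$ (built from columns $\{1,2,4\},\{1,2,5\},\{1,2,6\}$ of the transposed matrix) is nonzero, and the corresponding $4\times 4$ minor produces a single $q$-linearized polynomial of degree at most $q^J$ in $(\mu_1,\mu_2)$, whose nonvanishing follows from $xb-ya\neq 0$ (were the $\mu_i^{q^J}$-coefficients all zero, dividing and taking $q^J$-th roots would force $\left(\begin{smallmatrix}x&y\\a&b\end{smallmatrix}\right)$ to be singular). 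A nonzero linearized polynomial in two variables of degree $q^J$ has at most $q^{n+J}\leq q^{n+2J}$ zeros — that is where the $n$ in the exponent really comes from. You also misattribute the role of the hypothesis on $K_{\alpha,\beta,\gamma}^{I,J}$: in this theorem's rank-2 case the nonvanishing is the singular-matrix contradiction just described, not the non-$A_{q,I,J}$-power condition (which powers the scatteredness argument of Theorem \ref{thmorig1} and the $(2,2J)_q$-evasiveness proof). Your $\rho=1$ handling would work but is more than needed: the paper simply notes $(u,v,w)=\mu(x,y,z)$, giving at most $q^n$ triples outright.
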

    \begin{proof} Let $$\begin{array}{c}
    h_1:=(x,y,z,\fax,\fbx,\fcx)\\
    h_2:=(a,b,c,\fa{a}{b},\fb{a}{c},\fc{b}{c})\\
    h_3:=(r,s,t,\fa{r}{s},\fb{r}{t},\fc{s}{t})\end{array}$$
    three $\mathbb{F}_q$-independent vectors of $\us$. A vector of $\us$ $$h_4:=(u,v,w,\fa{u}{v},\fb{u}{w},\fc{v}{w})$$ belongs to $\langle{h}_1,{h}_2,h_3\rangle$ if and only if the following matrix has rank $3$
   $$M:=\begin{pmatrix}
   x&y&z&\fax&\fbx&\fcx\\
   a&b&c&\fa{a}{b}&\fb{a}{c}&\fc{b}{c}\\
   r&s&t&\fa{r}{s}&\fb{r}{t}&\fc{s}{t}\\
   u&v&w&\fa{u}{v}&\fb{u}{w}&\fc{v}{w}
   \end{pmatrix}.
   $$
   We want to count the number of $(u,v,w)\in\mathbb{F}_{q^n}^3$ such that rk($M$)$=3$.

   \textbf{Case 1.} $K:=\begin{vmatrix}
   x\hspace{2mm}&y\hspace{2mm}&z\\a\hspace{2mm}&b\hspace{2mm}&c\\r\hspace{2mm}&s\hspace{2mm}&t\end{vmatrix}\neq 0$ 
    
    Assuming that the matrix M has rank 3, then all the $4\times4$ submatrices  are singular, in particular the ones containing the submatrix $\begin{pmatrix}
    x\hspace{2mm}&y\hspace{2mm}&z\\a\hspace{2mm}&b\hspace{2mm}&c\\r\hspace{2mm}&s\hspace{2mm}&t\end{pmatrix}$.\\
    Thus
    {\footnotesize$$
    \begin{cases}
        u\begin{vmatrix} y&z&\fax\\b&c&\fa{a}{b}\\s&t&\fa{r}{s}\end{vmatrix}-v\begin{vmatrix} x&z&\fax\\a&c&\fa{a}{b}\\r&t&\fa{r}{s}\end{vmatrix}+w\begin{vmatrix} x&y&\fax\\a&b&\fa{a}{b}\\r&s&\fa{r}{s}\end{vmatrix} -K(\fa{u}{v})=0\vspace{2mm}\\
        
        u\begin{vmatrix} y&z&\fbx\\b&c&\fb{a}{c}\\s&t&\fb{r}{t}\end{vmatrix}-v\begin{vmatrix} x&z&\fbx\\a&c&\fb{a}{c}\\r&t&\fb{r}{t}\end{vmatrix}+w\begin{vmatrix} x&y&\fbx\\a&b&\fb{a}{c}\\r&s&\fb{r}{t}\end{vmatrix} -K(\fb{u}{w})=0\vspace{2mm}\\
        
        u\begin{vmatrix} y&z&\fcx\\b&c&\fc{b}{c}\\s&t&\fc{s}{t}\end{vmatrix}-v\begin{vmatrix} x&z&\fcx\\a&c&\fc{b}{c}\\r&t&\fc{s}{t}\end{vmatrix}+w\begin{vmatrix} x&y&\fcx\\a&b&\fc{b}{c}\\r&s&\fc{s}{t}\end{vmatrix} -K(\fc{v}{w})=0\vspace{2mm}.
    \end{cases}
$$}
 Dividing by $K$, we notice that we have defined three hypersurfaces in $\mathbb{P}^{3}(\mathbb{F}_{q^n})$
$$\begin{array}{c}
    \chi_1:uA_1-vB_1+wC_1-(\fa{u}{v})=0\\
    \chi_2:uA_2-vB_2+wC_2-(\fb{u}{w})=0\\
    \chi_3:uA_3-vB_3+wC_3-(\fc{v}{w})=0.
    \end{array}
$$ 
Since the intersection between the three hypersurfaces and the plane at infinity consist in three nonconcurrent lines, they do not share a component. Therefore, $q^{3J}<q^{n+2J}$ since $J<n$.

\textbf{Case 2.} 
$\begin{vmatrix}
   x\hspace{2mm}&y\hspace{2mm}&z\\a\hspace{2mm}&b\hspace{2mm}&c\\r\hspace{2mm}&s\hspace{2mm}&t\end{vmatrix}= 0$.\\
This implies, without loss of generality, the existence of $\lambda_1,\lambda_2\in\mathbb{F}_{q^n}$ such that $(r, s, t)=\lambda_1(x, y, z) + \lambda_2(a, b, c)$. Furthermore, since $(u, v, w)$ is a combination of $(x, y, z), (a, b, c), (r, s, t$), the problem reduces to studying for how many $\mu_1,\mu_2\in\mathbb{F}_{q^n}$ the following matrix $\overline{M}$ has rank 3.

$$\begin{pmatrix}
     
     x&a& \lambda_1x+\lambda_2a&  \mu_1x+\mu_2a\\
     y&b&\lambda_1y+\lambda_2b&\mu_1y+\mu_2b\\
     z&c& \lambda_1z+\lambda_2c&\mu_1z+\mu_2c\\[2mm]
     \fax&\fa{a}{b}&\begin{array}{c}\lambda_1\qi x\qi+\lambda_2\qi a\qi\\+\alpha(\lambda_1\qj y\qj+\lambda_2\qj b\qj)\end{array}&\begin{array}{c}\mu_1\qi x\qi+\mu_2\qi a\qi\\+\alpha(\mu_1\qj y\qj+\mu_2\qj b\qj)\end{array}\\[5mm]
     \fbx&\fb{a}{c}&\begin{array}{c}\lambda_1\qj x\qj+\lambda_2\qj a\qj\\+\beta(\lambda_1\qi z\qi+\lambda_2\qi c\qi)\end{array}&\begin{array}{c}\mu_1\qj x\qj+\mu_2\qj a\qj\\+\beta(\mu_1\qi z\qi+\mu_2\qi c\qi)\end{array}\\[5mm]
     \fcx&\fc{b}{c}&\begin{array}{c}\lambda_1\qi y\qi+\lambda_2\qi b\qi\\+\gamma(\lambda_1\qj z\qj+\lambda_2\qj c\qj)\end{array}&\begin{array}{c}\mu_1\qi y\qi+\mu_2\qi b\qi\\+\gamma(\mu_1\qj z\qj+\mu_2\qj c\qj)\end{array}
     
\end{pmatrix}$$

\textbf{Case 2.1.} $rank\begin{pmatrix}
    x&y&z\\
    a&b&c
\end{pmatrix}=2$\\
Without loss of generality, let's assume that $xb - ya \neq 0$. By  Kronecker's theorem, there exists a non-singular $3\times 3$ submatrix of the following matrix that contains $\begin{pmatrix}
    x&a\\y&b
\end{pmatrix}$

    $$\begin{pmatrix}
    x&a&\lambda_1x+\lambda_2a\\
    y&b&\lambda_1y+\lambda_2b\\
    z&c&\lambda_1z+\lambda_2c\\
    \fax&\fa{a}{b}&\lambda_1\qi x\qi+\lambda_2\qi a\qi+\alpha(\lambda_1\qj y\qj+\lambda_2\qj b\qj)\\
    \fbx&\fb{a}{c}&\lambda_1\qj x\qj+\lambda_2\qj a\qj+\beta(\lambda_1\qi z\qi+\lambda_2\qi c\qi) \\
    \fcx&\fc{b}{c}&\lambda_1\qi y\qi+\lambda_2\qi b\qi+\gamma(\lambda_1\qj z\qj+\lambda_2\qj c\qj)
\end{pmatrix}$$

Denote by $M_1$, $M_2$, and $M_3$ the submatrices containing columns $\{1,2,4\}$, $\{1,2,5\}$, and $\{1,2,6\}$, respectively. Therefore, one among $K_1:=\det M_1, K_2:=\det M_2, K_3:=\det M_3$ must be nonzero.
\begin{itemize}
    \item $K_1\neq 0$ or $K_2\neq 0$\\
    Since
    $$\begin{vmatrix}
     x&y&\fax&\fbx\\
   a&b&\fa{a}{b}&\fb{a}{c}\\
   \lambda_1x+\lambda_2a&\lambda_1y+\lambda_2b&\begin{array}{c}\lambda_1\qi x\qi+\lambda_2\qi a\qi\\+\alpha(\lambda_1\qj y\qj+\lambda_2\qj b\qj)
   \end{array}&\begin{array}{c}\lambda_1\qj x\qj+\lambda_2\qj a\qj\\+\beta(\lambda_1\qi z\qi+\lambda_2\qi c\qi)
   \end{array}\\
  \mu_1x+\mu_2a&\mu_1y+\mu_2b&\begin{array}{c}\mu_1\qi x\qi+\mu_2\qi a\qi\\+\alpha(\mu_1\qj y\qj+\mu_2\qj b\qj)
   \end{array}&\begin{array}{c}\mu_1\qj x\qj+\mu_2\qj a\qj\\+\beta(\mu_1\qi z\qi+\mu_2\qi c\qi)
   \end{array}
\end{vmatrix}=0,$$
we obtain 
\begin{equation}\label{Eq:polinomio}
    D_1\mu_1+D_2\mu_2+D_3\mu_1\qi+D_4\mu_2\qi+\mu_1\qj(-\alpha y\qj K_2+x\qj K_1)+\mu_2\qj(-\alpha b\qj K_2+a\qj K_1)=0,
\end{equation}
for some $D_1,D_2,D_3,D_4$.
We note that this polynomial cannot vanish, otherwise 
$$\begin{cases}
    x\qj K_1=\alpha y\qj K_2\\
    a\qj K_1=\alpha b\qj K_2
\end{cases}$$
and by dividing by the nonzero $K_i$ and taking the $q^j$-th root, we obtain that the matrix $\begin{pmatrix}
    x&y\\a&b
\end{pmatrix}$ is singular, which contradicts the assumption.
Therefore, \eqref{Eq:polinomio} is a non-vanishing polynomial in two variables, of degree less than or equal to $q^{J}$, so it has no more than $q^{n+J} $ solutions.
\item $K_1=0=K_2$ and $K_3\neq 0$\\
Since 
$$\begin{vmatrix}
     x&y&\fax&\fcx\\
   a&b&\fa{a}{b}&\fc{b}{c}\\
   \lambda_1x+\lambda_2a&\lambda_1y+\lambda_2b&\begin{array}{c}\lambda_1\qi x\qi+\lambda_2\qi a\qi\\+\alpha(\lambda_1\qj y\qj+\lambda_2\qj b\qj)
   \end{array}&\begin{array}{c}\lambda_1\qi y\qi+\lambda_2\qi b\qi\\+\gamma(\lambda_1\qj z\qj+\lambda_2\qj c\qj)
   \end{array}\\
  \mu_1x+\mu_2a&\mu_1y+\mu_2b&\begin{array}{c}\mu_1\qi x\qi+\mu_2\qi a\qi\\+\alpha(\mu_1\qj y\qj+\mu_2\qj b\qj)
   \end{array}&\begin{array}{c}\mu_1\qi y\qi+\mu_2\qi b\qi\\+\gamma(\mu_1\qj z\qj+\mu_2\qj c\qj)
   \end{array}
\end{vmatrix},$$
we obtain 
\begin{equation}
    E_1\mu_1+E_2\mu_2-x\qi K_3\mu_1\qi-a\qi K_3\mu_2\qi-\alpha K_3 y\qj\mu_1\qj-\alpha K_3 b\qj\mu_2\qj=0,
\end{equation}
for some $E_1,E_2$.
Since $xb - ya \neq 0$, either $x$ or $a$ is nonzero. Therefore, the polynomial is nonzero, and proceeding as before, the claim follows.
\end{itemize}
\textbf{Case 2.2.}  $rank\begin{pmatrix}
    x&y&z\\
    a&b&c
\end{pmatrix}=1$\\
This implies the existence of $\lambda,\sigma,\mu\in\mathbb{F}_{q^n}$ such that $$\begin{array}{c}
(a, b, c) = \lambda(x, y, z),\\
(r,s,t)=\sigma(x,y,z),\\
(u,v,w)=\mu(x,y,z),
\end{array}$$
and thus there are at most  $q^n$ triples $(u, v, w)$ for which the rank of $M$ equals $3$. The claim follows.
\end{proof}

Let us observe that the property of evasiveness cannot be significantly improved. Consider $\lambda_1,\lambda_2\in\mathbb{F}_q$ such that $$\begin{vmatrix}
    1&1&1\\
    {\lambda_1}&{\lambda_1}\qi&{\lambda_1}\qj\\
    {\lambda_2}&{\lambda_2}\qi&{\lambda_2}\qj
\end{vmatrix}\neq 0$$
and let
$$\begin{array}{ccl}
    h_1&:=&(1,0,0,1,1,0)\\
    h_2&:=&({\lambda_1},0,0,{\lambda_1}\qi,{\lambda_1}\qj,0)\\
    h_3&:=&({\lambda_2},0,0,{\lambda_2}\qi,{\lambda_2}\qj,0)\end{array}.$$
These three vectors are linearly independent. Now,  $h_4=(\mu,0,0,\mu\qi,\mu\qj,0)\in \langle h_1,h_2,h_3\rangle$  for each $\mu\in\mathbb{F}_{q^n}$, and thus $\us$ is   $(3,m)$-evasive with $m\geq n$.
 \begin{cor}\label{Cor:Finale}
       If $K_{\alpha ,\beta ,\gamma}^{I,J}$ is not an $A_{q,I,J}$-power in $\mathbb{F}_{q^n}$ and $J<({n-2})/{4}$ then $\us$ is indecomposable.
 \end{cor}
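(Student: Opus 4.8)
The plan is to argue by contradiction, converting a hypothetical decomposition into a splitting of the ambient space $V(6,q^n)$ by $\mathbb{F}_{q^n}$-subspaces and then playing the scattered bound against the evasiveness estimates proved above. Suppose $\us$ is decomposable, say $\us=U_{\mathcal F}\oplus U_{\mathcal G}$ with $\mathcal F,\mathcal G$ of orders $m_1,m_2$ and $m_1+m_2=3$. Unwinding the definition, the two factors occupy complementary coordinate blocks; writing $V_1,V_2$ for the $\mathbb{F}_{q^n}$-spans of these blocks we get $V(6,q^n)=V_1\oplus V_2$ and $U_i:=\us\cap V_i$ equals the corresponding factor, with $U_1,U_2\neq\{0\}$. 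Put $d_i:=\dim_{\mathbb{F}_{q^n}}V_i$, so $d_1+d_2=6$; up to swapping assume $d_1\le d_2$, whence $d_1\in\{1,2,3\}$ and $d_i\ge 1$.

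First I would use scatteredness to pin down the dimensions. By Theorem~\ref{thmorig1} the space $\us$ is scattered, hence so is each $U_i\subseteq\us$, and therefore $\dim_{\mathbb{F}_q}U_i\le d_in/2$ by the standard bound for scattered $\mathbb{F}_q$-subspaces of $V(d_i,q^n)$ (see \cite{blokhuis2000scattered}). Summing and using $\dim_{\mathbb{F}_q}\us=3n=(d_1+d_2)n/2$ forces equality throughout, so $\dim_{\mathbb{F}_q}U_i=d_in/2$ for $i=1,2$; in particular each $U_i$ is a \emph{maximum} scattered subspace of $V_i$ and $d_in$ is even.

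Next I would eliminate each value of $d_1$ by confronting $\dim_{\mathbb{F}_q}U_1=d_1n/2$ with evasiveness. If $d_1=1$ then $V_1$ is an $\mathbb{F}_{q^n}$-line, so scatteredness gives $\dim_{\mathbb{F}_q}U_1\le 1$, whereas $\dim_{\mathbb{F}_q}U_1=n/2>1$ because $n>4J+2\ge 6$; this is a contradiction. If $d_1=2$, the $(2,2J)_q$-evasiveness established above, applied to $V_1$, yields $n=\dim_{\mathbb{F}_q}U_1\le 2J$, contradicting $2J<(n-2)/2<n$. If $d_1=3$ (so $d_2=3$ as well), the $(3,n+2J)_q$-evasiveness applied to $V_1$ gives $3n/2=\dim_{\mathbb{F}_q}U_1\le n+2J$, i.e. $n\le 4J$, contradicting the hypothesis $4J<n-2$. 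As all three cases are impossible, $\us$ is indecomposable.

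The main obstacle is not a single hard estimate but the bookkeeping of the first two steps: one must justify that a decomposition of the sequence really yields complementary $\mathbb{F}_{q^n}$-subspaces $V_1,V_2$ with $U_i=\us\cap V_i$, and then tighten the scattered bound into an exact equality in order to control the $d_i$. Among the cases, $d_1=3$ is the binding one — it is precisely here that the full strength of the $(3,n+2J)_q$-evasiveness and the assumption $J<(n-2)/4$ (equivalently $4J<n-2$) are used, the weaker constraint $2J<n$ being enough to dispose of the other cases.
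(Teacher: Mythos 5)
Your argument is correct, but it is not the paper's argument: the paper disposes of this corollary in a single line, invoking \cite[Lemma 3.4]{bartoli2022exceptional} as a black box whose hypotheses are exactly the $(2,2J)_q$- and $(3,n+2J)_q$-evasiveness results established just before the corollary. What you have done is, in effect, reconstruct the content of that lemma in the case at hand: you turn a decomposition into complementary $\mathbb{F}_{q^n}$-subspaces $V_1\oplus V_2$ with $U_i=\us\cap V_i$, use the Blokhuis--Lavrauw bound together with $\dim_{\mathbb{F}_q}\us=3n$ to force $\dim_{\mathbb{F}_q}U_i=d_in/2$ (maximality), and then let the evasiveness caps eliminate each $d_1\in\{1,2,3\}$. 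This buys self-containedness and makes visible exactly where $J<(n-2)/4$ bites (only the $d_1=3$ case needs it, via $3n/2\le n+2J$; the bound $2J<n$ handles the rest), which the bare citation hides; the numerology of the threshold strongly suggests the cited lemma argues the same way, so your proof is a faithful unfolding rather than a wholly new mechanism. Two caveats. First, your appeal to Theorem \ref{thmorig1} for scatteredness tacitly uses $\gcd(I,J,n)=1$, which the corollary's statement omits; since the paper's route also presupposes $\us$ maximum scattered, this imprecision is inherited from the paper rather than introduced by you, but it deserves a flag. Second, under the paper's literal definition of decomposability the factors are sequence spaces $U_{\mathcal F},U_{\mathcal G}$ of orders $m_1+m_2=3$, hence of $\mathbb{F}_q$-dimensions $m_1n,m_2n$ sitting in blocks of $\mathbb{F}_{q^n}$-dimensions $m_i+s_i\ge m_i+1$, so the only splits are $(d_1,d_2)\in\{(2,4),(3,3)\}$; feeding the dimensions $m_in$ directly into the two evasiveness theorems yields $n\le 2J$ in both cases, i.e.\ a contradiction needing only $2J<n$ and no scatteredness at all. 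Your dimension-forcing detour is what makes the argument robust to the broader reading of decomposability (arbitrary complementary subspaces, factors known only as intersections), at the price of the stronger hypothesis $J<(n-2)/4$ --- precisely the trade made by the lemma the paper cites.
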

       \begin{proof} It follows from \cite[Lemma 3.4]{bartoli2022exceptional}.
    \end{proof}
 We conclude this section with our main result. 
 
\begin{theorem}
     Assume that $\gcd(n,I,J)=1$ and $K_{\alpha ,\beta ,\gamma}^{I,J}$ is not an $A_{q,I,J}$-power in $\mathbb{F}_{q^n}$. Then $\us$ is scattered and indecomposable in infinitely many extensions of $\mathbb{F}_q$.
\end{theorem}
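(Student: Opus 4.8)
The plan is to combine the results already established in this section, since the final theorem is essentially a bookkeeping assembly of the preceding propositions and corollaries. The strategy is to identify, for a single fixed base field $\mathbb{F}_{q^n}$, the two properties we need—scatteredness and indecomposability—and then pass to infinitely many extensions where both hold simultaneously.

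First I would recall the scatteredness criterion. By Theorem \ref{thmorig1}, $\us$ is scattered over $\mathbb{F}_{q^n}$ whenever $\gcd(I,J,n)=1$ and $K_{\alpha ,\beta ,\gamma}^{I,J}$ is not an $A_{q,I,J}$-power in $\mathbb{F}_{q^n}$. Corollary \ref{cor1} already upgrades this to exceptional scatteredness: using Proposition \ref{propTN}, there are infinitely many degrees $m$ with $\gcd(A_{q,I,J},C_{n,m})=1$, and for each such $m$ the Bézout-identity argument in Corollary \ref{cor1} shows that $K_{\alpha ,\beta ,\gamma}^{I,J}$ remains a non-$A_{q,I,J}$-power in $\mathbb{F}_{q^{mn}}$. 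Hence $\us$ is scattered over each of these infinitely many extensions $\mathbb{F}_{q^{mn}}$.

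Next I would secure indecomposability on each of these extensions via Corollary \ref{Cor:Finale}. That corollary requires the non-power hypothesis (already guaranteed by the previous paragraph in each extension $\mathbb{F}_{q^{mn}}$) together with the numerical bound $J<(n-2)/4$ expressed relative to the ambient degree. The key observation is that when we pass from $\mathbb{F}_{q^n}$ to $\mathbb{F}_{q^{mn}}$, the indices $I,J$ are unchanged while the degree grows to $mn$, so the required inequality becomes $J<(mn-2)/4$. Since $J$ is fixed and $mn\to\infty$, this inequality is automatically satisfied for all sufficiently large $m$. Thus, among the infinitely many $m$ produced in the scatteredness step, all but finitely many also satisfy the bound of Corollary \ref{Cor:Finale}, so for each such $m$ the set $\us$, viewed over $\mathbb{F}_{q^{mn}}$, is indecomposable.

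Finally I would intersect the two infinite families of admissible $m$. The scatteredness step furnishes an infinite set of $m$, and the indecomposability bound excludes only finitely many of them, so their intersection is still infinite. For every $m$ in this intersection, $\us$ is simultaneously scattered (Theorem \ref{thmorig1} applied in $\mathbb{F}_{q^{mn}}$) and indecomposable (Corollary \ref{Cor:Finale}), which is exactly the claim. I do not anticipate a genuine obstacle here, as no new geometric estimate is needed; the only point requiring mild care is verifying that the non-power condition genuinely persists in each extension—this is precisely what Corollary \ref{cor1} delivers—and that the growth of the ambient degree eventually dominates the fixed constant $4J+2$, which is immediate.
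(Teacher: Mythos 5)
Your proposal is correct and follows essentially the same route as the paper: invoke Corollary \ref{cor1} to obtain infinitely many extensions $\mathbb{F}_{q^{nm}}$ where the non-power condition (hence scatteredness) persists, then observe that $J<(nm-2)/4$ holds for all sufficiently large $m$, so Corollary \ref{Cor:Finale} yields indecomposability on a cofinite subfamily. Your explicit remark that the non-power hypothesis must itself carry over to each extension is a point the paper leaves implicit, but the argument is the same.
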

     \begin{proof} From Corollary (\ref{cor1}) we have the existence of a sequence of positive integers $(m_k)_k$ such that $\us$ is scattered in 
     $\mathbb{F}_{q^{nm_k}}$ for every $k$. Moreover, there exists an 
     $m_{k_0}$ such that 
     $$J < \frac{n\cdot m_{k_0}-2}{4},$$ and, by Corollary \ref{Cor:Finale}, $\us$ is indecomposable in every extension $\mathbb{F}_{q^{nm_k}}$ with $m_k\ge m_{k_0}$.
     \end{proof}
     
\section{Equivalence issue}
This section is devoted to the determine a lower bound on the number of $\Gamma L_q(6,q^n)$-inequivalent scattered sets are contained in our family. Recall that, since $q=p^h$, the size of $Aut(\mathbb{F}_{q^n})$ is $hn$.
\begin{theorem}
    Let $I,J,I_0,J_0$ such that $\max(I+J,J+J_0,I_0+J_0)<n$, $I< J$ and $I_0< J_0$. The two sets $\us$ and $U_{\overline{\alpha} ,\overline{\beta} ,\overline{\gamma}}^{I_0,J_0,n}$ are not $\Gamma L(6,q^n)$-equivalent if $(I,J)\neq(I_0,J_0)$.
     \end{theorem}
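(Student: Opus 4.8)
The plan is to prove the contrapositive: any $\varphi\in\Gamma L(6,q^n)$ with $\varphi(\us)=U_{\overline\alpha,\overline\beta,\overline\gamma}^{I_0,J_0,n}$ forces $(I,J)=(I_0,J_0)$. Write $\varphi(v)=M\,v^{\sigma}$, where $\sigma\colon t\mapsto t^{p^{k}}$ is a field automorphism applied componentwise and
$$M=\begin{pmatrix}A&B\\ C&D\end{pmatrix}\in GL(6,q^n),\qquad A,B,C,D\in\mathbb F_{q^n}^{3\times 3}.$$
Both spaces are graphs: setting $v=(x,y,z)$ and $\Phi(v)=(\fa{x}{y},\ \fb{x}{z},\ \fc{y}{z})$ we have $\us=\{(v,\Phi(v)):v\in\field^{3}\}$, while $U_{\overline\alpha,\overline\beta,\overline\gamma}^{I_0,J_0,n}=\{(v',\Phi'(v')):v'\in\field^{3}\}$ for the analogous map $\Phi'$ built from $(\overline\alpha,\overline\beta,\overline\gamma)$ and the exponents $q^{I_0},q^{J_0}$. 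The equivalence is then equivalent to the functional identity
$$C\,v^{\sigma}+D\,\Phi(v)^{\sigma}=\Phi'\!\bigl(A\,v^{\sigma}+B\,\Phi(v)^{\sigma}\bigr)\qquad\text{for all }v\in\field^{3},$$
together with the requirement that $v\mapsto A v^{\sigma}+B\Phi(v)^{\sigma}$ be a bijection of $\field^{3}$.

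The core is a comparison of Frobenius exponents. Every monomial occurring on either side has the shape $c\,t^{\,p^{k}q^{a}}$ with $t\in\{x,y,z\}$ and $c\in\field$; call $a$ its \emph{$q$-exponent}. Two such monomials agree as maps $\field\to\field$ exactly when their $q$-exponents agree modulo $n$, so the common twist $p^{k}$ is irrelevant and the field automorphism affects only the coefficients $\alpha^{\sigma},\beta^{\sigma},\gamma^{\sigma}$. Reading off degrees, $v^{\sigma}$ carries $q$-exponent $0$ and $\Phi(v)^{\sigma}$ carries $q$-exponents $I,J$; hence the left-hand side has $q$-exponents in $\{0,I,J\}$, whereas the right-hand argument has $q$-exponents in $\{0,I,J\}$ and applying $\Phi'$ adds $I_0$ or $J_0$, producing $q$-exponents in $\{I_0,J_0,I+I_0,J+I_0,I+J_0,J+J_0\}$. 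The hypothesis $\max(I+J,J+J_0,I_0+J_0)<n$ keeps every exponent in a window of length $<n$, so the corresponding linearized monomials are $\field$-linearly independent and coefficients may be matched one $q$-exponent at a time.

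First I would isolate the largest $q$-exponent $J+J_0$ on the right; it is strictly above the left-hand maximum $J$ (since $J_0\ge 1$) and lies in range, so it occurs only on the right. A direct computation identifies its coefficient in each component with a full row of $B$ up to the nonzero factors $\alpha^{\sigma},\gamma^{\sigma}$, forcing $B=0$. Then $M$ is block lower-triangular, so $A,D\in GL(3,q^n)$ and the identity becomes $C v^{\sigma}+D\Phi(v)^{\sigma}=\Phi'(A v^{\sigma})$. The right-hand side now has occurring $q$-exponents exactly $\{I_0,J_0\}$ (both appear because $A$ is invertible), while the left-hand side occurring $q$-exponents lie in $\{0,I,J\}$ and contain both $I$ and $J$, since the exponent-$I$ and exponent-$J$ parts of $D\Phi(v)^{\sigma}$ are invertible $\field$-linear maps of $(x,y,z)$. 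Matching the two sets gives $\{I,J\}\subseteq\{I_0,J_0\}$; as both are two-element sets they coincide, and the orderings $I<J$, $I_0<J_0$ yield $(I,J)=(I_0,J_0)$.

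I expect the delicate point to be the treatment of the mixing block $B$: a priori $A$ and $B$ could conspire so that the high-degree contributions of $\Phi'(A v^{\sigma}+B\Phi(v)^{\sigma})$ cancel against one another. What rules this out is precisely that the unique top $q$-exponent $J+J_0$ can be reached in no other way, together with the fact that $\max(I+J,J+J_0,I_0+J_0)<n$ forbids any two of the six right-hand exponents, or any of them against the three left-hand exponents, from colliding modulo $n$. Once such wrap-around collisions are excluded the monomial-by-monomial comparison is forced, and the remaining steps reduce to routine invertibility bookkeeping over $\field$.
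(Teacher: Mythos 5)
Your proof is correct, and it takes a genuinely different route from the paper's. The paper reduces equivalence to the same three identities you write down (its equations (a), (b), (c), obtained by substituting the first three rows of $M$ into the last three), but it then works essentially only with the first identity and aims at showing that the whole fourth row $a_{41},\ldots,a_{46}$ of $M$ vanishes; since the low exponents $0,I,J$ can genuinely coincide with $I_0,J_0,I+I_0,\ldots$, this costs a lengthy case analysis ($I\neq I_0,J_0$; $I=J_0$ with sub-cases $J=I+I_0$, $J=2I$, $J+I_0=2I$; $I=I_0$ with sub-cases $J=I+J_0$, $J=2I$, $J_0=3I$), each case matching a different batch of monomials. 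Your two-step scheme sidesteps all of this: $J+J_0$ is strictly the largest exponent in play and lies below $n$, so it collides with nothing, and extracting its coefficient from the three identities kills the three rows of the block $B$ (identity (a) yields $(a_{24},a_{25},a_{26})=0$, (b) yields $(a_{14},a_{15},a_{16})=0$, (c) yields $(a_{34},a_{35},a_{36})=0$, up to the nonzero factors $\overline{\alpha},\overline{\gamma},\alpha^\sigma,\gamma^\sigma$); block-triangularity then makes $A$ and $D$ invertible, and the wholesale comparison of occurring exponent sets finishes uniformly — even coincidences such as $J=I_0$ need no separate treatment, since $I$ is then left with no partner in $\{I_0,J_0\}$. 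Two small remarks. First, the sweeping claim in your closing paragraph is false as stated: the window hypothesis does not forbid collisions such as $I_0=I$ or $I+I_0=J_0$, only wrap-around collisions modulo $n$; fortunately your actual argument never uses this, relying only on the uniqueness of the maximum $J+J_0$ and on per-exponent matching of coefficients, so nothing breaks. Second, requiring $v\mapsto Av^{\sigma}+B\Phi(v)^{\sigma}$ to be a bijection is superfluous, since both spaces have $\mathbb{F}_q$-dimension $3n$ and containment already suffices. What the paper's approach buys is that a single identity (a) carries the whole proof; what yours buys is brevity and robustness — indeed the paper's chosen monomial batches implicitly assume $I,I_0\geq 1$, whereas your exponent-set comparison would survive even the degenerate case $I=0$.
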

    \begin{proof} The two sets $\us$ and $U_{\overline{\alpha} ,\overline{\beta} ,\overline{\gamma}}^{I_0,J_0,n}$ are $\Gamma L(6,q^n)$-equivalent if and only if there exist $\sigma\in Aut(\mathbb{F}_{q^n})$ and a matrix $M\in GL(6,\mathbb{F}_{q^n})$ such that
    $$\begin{pmatrix}
        a_{11}&a_{12}&a_{13}&a_{14}&a_{15}&a_{16}\\
        a_{21}&a_{22}&a_{23}&a_{24}&a_{25}&a_{26}\\
        a_{31}&a_{32}&a_{33}&a_{34}&a_{35}&a_{36}\\
        a_{41}&a_{42}&a_{43}&a_{44}&a_{45}&a_{46}\\
        a_{51}&a_{52}&a_{53}&a_{54}&a_{55}&a_{56}\\
        a_{61}&a_{62}&a_{63}&a_{64}&a_{65}&a_{66}
    \end{pmatrix}
    \begin{pmatrix}
        x^\sigma\\
        y^\sigma\\
        z^\sigma\\
        (\fax)^\sigma\\
        (\fbx)^\sigma\\
        (\fcx)^\sigma
    \end{pmatrix}
=\begin{pmatrix}
    u\\
    v\\
    w\\
    u^{q^{I_0}}+\overline{\alpha}v^{q^{J_0}}\\
    u^{q^{J_0}}+\overline{\beta}w^{q^{I_0}}\\
    v^{q^{I_0}}+\overline{\gamma}w^{q^{J_0}}\\
\end{pmatrix}
$$
Denote $\tilde{x}:=x\as,\tilde{y}:=y\as,\tilde{z}:=z\as,A:=\alpha\as,B:=\beta\as,\Gamma:=\gamma\as$. We obtain\begin{small}
    
$$
\begin{cases}
   a_{11}\tilde{x}+a_{12}\tilde{y}+a_{13}\tilde{z}+a_{14}(\tilde{x}\qi+A\tilde{y}\qj)+a_{15}(\tilde{x}\qj+B\tilde{z}\qi)+a_{16}(\tilde{y}\qi+\Gamma\tilde{z}\qj)=u\\
   a_{21}\tilde{x}+a_{22}\tilde{y}+a_{23}\tilde{z}+a_{24}(\tilde{x}\qi+A\tilde{y}\qj)+a_{25}(\tilde{x}\qj+B\as\tilde{z}\qi)+a_{26}(\tilde{y}\qi+\Gamma\tilde{z}\qj)=v\\
   a_{31}\tilde{x}+a_{32}\tilde{y}+a_{33}\tilde{z}+a_{34}(\tilde{x}\qi+\alpha\as\tilde{y}\qj)+a_{35}(\tilde{x}\qj+B\tilde{z}\qi)+a_{36}(\tilde{y}\qi+\Gamma\tilde{z}\qj)=w\\
   a_{41}\tilde{x}+a_{42}\tilde{y}+a_{43}\tilde{z}+a_{44}(\tilde{x}\qi+A\tilde{y}\qj)+a_{45}(\tilde{x}\qj+B\tilde{z}\qi)+a_{46}(\tilde{y}\qi+\Gamma\tilde{z}\qj)=u\qio+\overline{\alpha}v\qjo\\
   a_{51}\tilde{x}+a_{52}\tilde{y}+a_{53}\tilde{z}+a_{54}(\tilde{x}\qi+A\tilde{y}\qj)+a_{55}(\tilde{x}\qj+B\tilde{z}\qi)+a_{56}(\tilde{y}\qi+\Gamma\tilde{z}\qj)=u\qjo+\overline{\beta}w\qio\\
   a_{61}\tilde{x}+a_{62}\tilde{y}+a_{63}\tilde{z}+a_{64}(\tilde{x}\qi+A\tilde{y}\qj)+a_{65}(\tilde{x}\qj+B\tilde{z}\qi)+a_{66}(\tilde{y}\qi+\Gamma\tilde{z}\qj)=v\qio+\overline{\gamma}w\qjo\\
\end{cases}$$
\end{small}
Substituting, we obtain these three equations
\begin{align*}
a_{41}\tilde{x}+a_{42}\tilde{y}+a_{43}\tilde{z}+a_{44}(\tilde{x}\qi+A\tilde{y}\qj)+a_{45}(\tilde{x}\qj+B\tilde{z}\qi)+a_{46}(\tilde{y}\qi+\Gamma\tilde{z}\qj)&+\\-a_{11}\qio\tilde{x}\qio-a_{12}\qio\tilde{y}\qio-a_{13}\qio\tilde{z}\qio-a_{14}\qio(\tilde{x}^{q^{I+I_0}}+A\qio\tilde{y}^{q^{J+I_0}})&+\\-a_{15}\qio(\tilde{x}^{q^{J+I_0}}+B\qio\tilde{z}^{q^{I+I_0}})-a_{16}\qio(\tilde{y}^{q^{I+I_0}}+\Gamma\qio\tilde{z}^{q^{J+I_0}})&+\\-\overline{\alpha}(a_{21}\qjo\tilde{x}\qjo+a_{22}\qjo\tilde{y}\qjo+a_{23}\qjo\tilde{z}\qjo+a_{24}\qjo(\tilde{x}^{q^{I+J_0}}+A\qjo\tilde{y}^{q^{J+J_0}})&+\\+a_{25}\qjo(\tilde{x}^{q^{J+J_0}}+B\qjo\tilde{z}^{q^{I+J_0}})+a_{26}\qjo(\tilde{y}^{q^{I+J_0}}+\Gamma\qjo\tilde{z}^{q^{J+J_0}}))&=0\tag{a}\label{eq1}
 \end{align*}
 \begin{align*}
a_{51}\tilde{x}+a_{52}\tilde{y}+a_{53}\tilde{z}+a_{54}(\tilde{x}\qi+A\tilde{y}\qj)+a_{55}(\tilde{x}\qj+B\tilde{z}\qi)+a_{56}(\tilde{y}\qi+\Gamma\tilde{z}\qj)&+\\-a_{11}\qjo\tilde{x}\qjo-a_{12}\qjo\tilde{y}\qjo-a_{13}\qjo\tilde{z}\qjo-a_{14}\qjo(\tilde{x}^{q^{I+J_0}}+A\qjo\tilde{y}^{q^{J+J_0}})&+\\-a_{15}\qjo(\tilde{x}^{q^{J+J_0}}+B\qjo\tilde{z}^{q^{I+J_0}})-a_{16}\qjo(\tilde{y}^{q^{I+J_0}}+\Gamma\qjo\tilde{z}^{q^{J+J_0}})&+\\-\overline{\beta}(a_{31}\qio\tilde{x}\qio+a_{32}\qio\tilde{y}\qio+a_{33}\qio\tilde{z}\qio+a_{34}\qio(\tilde{x}^{q^{I+I_0}}+A\qio\tilde{y}^{q^{J+I_0}})&+\\+a_{35}\qio(\tilde{x}^{q^{J+I_0}}+B\qio\tilde{z}^{q^{I+I_0}})+a_{36}\qio(\tilde{y}^{q^{I+I_0}}+\Gamma\qio\tilde{z}^{q^{J+I_0}}))&=0\tag{b}\label{eq2}
 \end{align*}
\begin{align*}
a_{61}\tilde{x}+a_{62}\tilde{y}+a_{63}\tilde{z}+a_{64}(\tilde{x}\qi+A\tilde{y}\qj)+a_{65}(\tilde{x}\qj+B\tilde{z}\qi)+a_{66}(\tilde{y}\qi+\Gamma\tilde{z}\qj)&+\\-a_{21}\qio\tilde{x}\qio-a_{22}\qio\tilde{y}\qio-a_{23}\qio\tilde{z}\qio-a_{24}\qio(\tilde{x}^{q^{I+I_0}}+A\qio\tilde{y}^{q^{J+I_0}})&+\\-a_{25}\qio(\tilde{x}^{q^{J+I_0}}+B\qio\tilde{z}^{q^{I+I_0}})-a_{26}\qio(\tilde{y}^{q^{I+I_0}}+\Gamma\qio\tilde{z}^{q^{J+I_0}})&+\\-\overline{\gamma}(a_{31}\qjo\tilde{x}\qjo+a_{32}\qjo\tilde{y}\qjo+a_{33}\qjo\tilde{z}\qjo+a_{34}\qjo(\tilde{x}^{q^{I+J_0}}+A\qjo\tilde{y}^{q^{J+J_0}})&+\\+a_{35}\qjo(\tilde{x}^{q^{J+J_0}}+B\qjo\tilde{z}^{q^{I+J_0}})+a_{36}\qjo(\tilde{y}^{q^{I+J_0}}+\Gamma\qjo\tilde{z}^{q^{J+J_0}}))&=0\tag{c}\label{eq3}
 \end{align*}
 If there exists an element in $GL(6,\mathbb{F}_{q^n})$ such that (\ref{eq1}), (\ref{eq2}), and (\ref{eq3}) are satisfied for every $\tilde{x}, \tilde{y}, \tilde{z}$, it implies that these polynomials must be identically zero.
 \begin{itemize}
     \item $I\neq I_0,J_0$
     
     Considering the coefficients of $\tilde{x},\tilde{y},\tilde{z},\tilde{x}\qi,\tilde{y}\qi,\tilde{z}\qi$ in (\ref{eq1}) we obtain $$a_{41}=a_{42}=a_{43}=a_{44}=a_{45}=a_{46}=0.$$ Hence $M\not\in GL(6,\mathbb{F}_{q^n})$.
     \item $I=J_0$

     \begin{itemize}
         \item $J\neq I+I_0,2I$

         Considering the coefficients of $\tilde{x},\tilde{y},\tilde{z},\tilde{x}\qj,\tilde{y}\qj,\tilde{z}\qj$ in (\ref{eq1}) we obtain $a_{41}=a_{42}=a_{43}=a_{44}=a_{45}=a_{46}=0.$

         \item $J=I+I_0$ and $J+I_0=2I$

         Considering the coefficients of 
         $$\tilde{x},\tilde{y},\tilde{z},\tilde{x}\qj,\tilde{y}\qj,\tilde{z}\qj,\tilde{x}^{q^{I+J_0}},\tilde{y}^{q^{I+J_0}},\tilde{z}^{q^{I+J_0}},\tilde{x}^{q^{J+J_0}},\tilde{y}^{q^{J+J_0}},\tilde{z}^{q^{J+J_0}}$$ in (\ref{eq1}) we obtain $a_{41}=a_{42}=a_{43}=a_{44}=a_{45}=a_{46}=0.$

         \item $J=I+I_0$ and $J+I_0\neq 2I$

         Considering the coefficients of $\tilde{x},\tilde{y},\tilde{z},\tilde{x}\qj,\tilde{y}\qj,\tilde{z}\qj,\tilde{x}^{q^{J+I_0}},\tilde{y}^{q^{J+I_0}},\tilde{z}^{q^{J+I_0}}$ in (\ref{eq1}) we obtain $a_{41}=a_{42}=a_{43}=a_{44}=a_{45}=a_{46}=0.$

         \item $J=2I$ and $J\neq I+I_0$

         Considering the coefficients of $\tilde{x},\tilde{y},\tilde{z},\tilde{x}\qj,\tilde{y}\qj,\tilde{z}\qj,\tilde{x}^{q^{J+J_0}},\tilde{y}^{q^{J+J_0}},\tilde{z}^{q^{J+J_0}}$ in (\ref{eq1}) we obtain $a_{41}=a_{42}=a_{43}=a_{44}=a_{45}=a_{46}=0.$
         \end{itemize}
         Hence $M\not\in GL(6,\mathbb{F}_{q^n})$.
    \item $I=I_0$ and $J\neq J_0$
    \begin{itemize}
        \item $J\neq I+J_0$ and $J\neq 2I$

        Considering the coefficients of $\tilde{x},\tilde{y},\tilde{z},\tilde{x}\qj,\tilde{y}\qj,\tilde{z}\qj$ in (\ref{eq1}) we obtain $a_{41}=a_{42}=a_{43}=a_{44}=a_{45}=a_{46}=0.$

        \item $J= I+J_0$

        Considering the coefficients of $\tilde{x},\tilde{y},\tilde{z},\tilde{x}\qj,\tilde{y}\qj,\tilde{z}\qj,\tilde{x}^{q^{J+J_0}},\tilde{y}^{q^{J+J_0}},\tilde{z}^{q^{J+J_0}}$ in (\ref{eq1}) we obtain $a_{41}=a_{42}=a_{43}=a_{44}=a_{45}=a_{46}=0.$

        \item $J=2I$ and $J_0\neq 3I$

        Considering the coefficients of $\tilde{x},\tilde{y},\tilde{z},\tilde{x}\qj,\tilde{y}\qj,\tilde{z}\qj,\tilde{x}^{q^{I+J}},\tilde{y}^{q^{I+J}},\tilde{z}^{q^{I+J}}$ in (\ref{eq1}) we obtain $a_{41}=a_{42}=a_{43}=a_{44}=a_{45}=a_{46}=0.$

        \item $J=2I$ and $J_0= 3I$

        Considering the coefficients of $\tilde{x},\tilde{y},\tilde{z},\tilde{x}^{q^{2I}},\tilde{y}^{q^{2I}},\tilde{z}^{q^{2I}},\tilde{x}^{q^{5I}},\tilde{y}^{q^{5I}},\tilde{z}^{q^{5I}}$ in (\ref{eq1}) we obtain $a_{41}=a_{42}=a_{43}=a_{44}=a_{45}=a_{46}=0.$

    \end{itemize}
    Hence $M\not\in GL(6,\mathbb{F}_{q^n})$.
 \end{itemize}
 \end{proof}

 \begin{theorem}  
 Let $(I,J)$ such that $J<n/2$. Two sets $\us$ and $U_{\overline{\alpha} ,\overline{\beta} ,\overline{\gamma}}^{I,J,n}$ are $\Gamma L(6,q^n)$-equivalent if and only if $\exists$ $\sigma\in Aut(\mathbb{F}_{q^n})$ such that one among these three elements is a $q^{3K}-1$ power:$$\left(\frac{\overline{\alpha}}{\alpha^{\sigma}}\right)\left(\frac{\overline{\gamma}}{\gamma^{\sigma}}\right)^{q^k}\left(\frac{\beta^{\sigma}}{\overline{\beta}}\right)^{q^{2k}},\left(\frac{\overline{\gamma}}{\alpha^{\sigma}}\right)\left(\frac{1}{\gamma^{\sigma}\overline{\beta}}\right)^{q^k}\left({\overline{\alpha}}{\beta^{\sigma}}\right)^{q^{2k}},\left(\frac{1}{\alpha^{\sigma}\overline{\beta}}\right)\left(\frac{\overline{\alpha}}{\gamma^{\sigma}}\right)^{q^k}\left({\overline{\gamma}}{\beta^{\sigma}}\right)^{q^{2k}}.$$
 \end{theorem}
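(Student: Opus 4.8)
The plan is to run the same machine as in the preceding (non-equivalence) theorem, now with $I_0=I$ and $J_0=J$. An equivalence is recorded by a $\sigma\in\mathrm{Aut}(\mathbb{F}_{q^n})$ and a matrix $M=(a_{ij})\in GL(6,\mathbb{F}_{q^n})$; substituting the first three rows (which express $u,v,w$) into the last three produces the three polynomials (a), (b), (c), and an equivalence exists iff these vanish identically in $\tilde x,\tilde y,\tilde z$ for some invertible $M$. I would compare the coefficients of the monomials $\tilde x^{q^s},\tilde y^{q^s},\tilde z^{q^s}$. Here the hypothesis $J<n/2$ enters exactly once: it guarantees that all exponents that can occur, namely $\{0,I,J,2I,I+J,2J\}$, lie in $[0,n)$ and are therefore pairwise distinct modulo $n$, with the single coincidence $J=2I$.

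Assume first the generic case $J\neq 2I$. The coefficients of the degree-$0$ monomials $\tilde x,\tilde y,\tilde z$ are fed only by rows $4,5,6$ (the Frobenius images $u^{q^I},v^{q^J},w^{q^I},w^{q^J}$ all have minimal exponent $\geq q^I$), so they force the lower-left $3\times3$ block of $M$ to vanish. Comparing next the coefficients of $q^{2I}$ and $q^{2J}$ isolates the twisted contributions $a_{14}^{q^I},\dots,a_{36}^{q^I}$ and $a_{24}^{q^J},\dots$, forcing $a_{14}=\cdots=a_{36}=0$; hence the upper-right block also vanishes and $u,v,w$ are $\mathbb{F}_{q^n}$-linear in $\tilde x,\tilde y,\tilde z$. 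Thus $M=\mathrm{diag}(P,R)$ with $P=(a_{ij})_{1\le i,j\le 3}$ and $(u,v,w)^{\mathsf T}=P(\tilde x,\tilde y,\tilde z)^{\mathsf T}$. The surviving coefficients (those of $q^I$ and $q^J$) then express each entry of $R$ as a single Frobenius-twisted entry of $P$, and in addition impose nine \emph{consistency} relations involving only $P$ and the parameters, of the type $\alpha^{\sigma}a_{11}^{q^I}=\overline{\alpha}\,a_{22}^{q^J}$, etc.

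The decisive structural fact is that these nine relations decouple the entries of $P$ into the three transversals $G_0=\{a_{11},a_{22},a_{33}\}$, $G_1=\{a_{12},a_{23},a_{31}\}$, $G_2=\{a_{13},a_{21},a_{32}\}$, i.e.\ the three even permutations of $\{1,2,3\}$; each relation couples two entries of a single transversal by a nonzero multiplicative factor, so inside each $G_i$ the entries are simultaneously zero or simultaneously nonzero. Eliminating around the cycle $G_0$ collapses its three relations into one equation of the form $t^{q^{3K}-1}=c$, where $t$ is a fixed Frobenius power of $a_{11}$ and $c$ equals, up to inversion and a Frobenius power, the first displayed element. Since the $(q^{3K}-1)$-th powers form a subgroup of $\mathbb{F}_{q^n}^{*}$ that is stable under both Frobenius and inversion, $G_0$ can be realized with nonzero entries precisely when the first displayed element is a $(q^{3K}-1)$-power; running the identical elimination on $G_1$ and $G_2$ yields the second and third displayed elements.

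Both implications then drop out. If the two sets are $\Gamma L$-equivalent, $P$ is invertible, hence nonzero, hence at least one transversal has nonzero entries, so at least one of the three elements is a $(q^{3K}-1)$-power. Conversely, if (say) the first element is such a power, I solve the $G_0$-equation for nonzero $a_{11},a_{22},a_{33}$, set the remaining entries of $P$ to $0$, reconstruct $R$ through the twist formulas (it comes out diagonal, hence invertible), and obtain a block-diagonal $M\in GL(6,\mathbb{F}_{q^n})$ realizing the equivalence; the other two elements are handled symmetrically. The one step that requires genuine care — and the main obstacle — is the excluded value $J=2I$: there $\tilde x^{q^{2I}}=\tilde x^{q^J}$ (and likewise for the companion exponents), so several of the monomials used above merge and the clean vanishing deductions $a_{14}=\cdots=a_{36}=0$ no longer follow from a single coefficient. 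This case must be treated by a separate, more delicate bookkeeping that extracts the missing information from the higher powers $q^{3I}$ and $q^{4I}=q^{2J}$, after which one verifies that no extra solutions arise and that the same three conditions characterize equivalence.
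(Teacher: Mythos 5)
Your proposal reproduces the paper's own argument essentially verbatim: with $I_0=I$, $J_0=J$, coefficient comparison on the three polynomials forces the off-diagonal $3\times 3$ blocks of $M$ to vanish, the nine surviving relations decouple into exactly the three transversal $3$-cycles $\{a_{11},a_{22},a_{33}\}$, $\{a_{21},a_{32},a_{13}\}$, $\{a_{31},a_{12},a_{23}\}$, and composing around each cycle yields $a^{q^{3K}-1}=c$ with $c$ equal, up to inversion and a $q^{-I}$-th Frobenius power (both of which preserve the subgroup of $(q^{3K}-1)$-th powers), to one of the three displayed elements, after which sufficiency follows by taking one transversal nonzero and the others zero so that the lower block comes out as an invertible monomial matrix, exactly as in the paper. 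The one point where you diverge --- flagging $J=2I$, where $\tilde{x}^{q^{2I}}=\tilde{x}^{q^{J}}$ merges the monomials used to kill $a_{14},\dots,a_{36}$, as needing separate bookkeeping --- is a legitimate observation that the paper's proof in fact passes over silently (it reads off the same coefficients without distinguishing that case), so your deferred verification there matches, and is if anything more candid than, the published argument.
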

 \begin{proof}

 We examine again  polynomials (\ref{eq1}), (\ref{eq2}), and (\ref{eq3}) as in the proof of the previous theorem in the case where $I_0=I,J_0=J$ and investigate the conditions under which they vanish.
By checking the coefficients of $\tilde{x}, \tilde{y}, \tilde{z}, \tilde{x}^{q^{2I}}, \tilde{y}^{q^{2I}}, \tilde{z}^{q^{2I}}, \tilde{x}^{q^{2J}} \tilde{y}^{q^{2J}}, \tilde{z}^{q^{2J}}$  we obtain:$$
 \begin{array}{c}
a_{41}=a_{42}=a_{43}=a_{51}=a_{52}=a_{53}a_{61}=a_{62}=a_{63}=a_{14}=\\=a_{15}=a_{16}=a_{24}=a_{25}=a_{26}a_{34}=a_{35}=a_{36}=0.\end{array}$$
Therefore, the polynomials read:
$$
\begin{cases}
    \begin{array}{c}
     \tilde{x}\qi(a_{44}-a_{11}\qi)+\tilde{x}\qj(a_{45}-\overline{\alpha}a_{21}\qj)+\tilde{y}\qi(a_{46}-a_{12}\qi)+\\+\tilde{y}\qj(Aa_{44}-\overline{\alpha}a_{22}\qj)+
     \tilde{z}\qi(Ba_{45}-a_{13}\qi)+\tilde{z}\qj(\Gamma a_{46}-\overline{\alpha}a_{23}\qj)=0
         \end{array}\vspace{2mm}\\
    \begin{array}{c}
     \tilde{x}\qi(a_{54}-\overline{\beta}a_{31}\qi)+\tilde{x}\qj(a_{55}-a_{11}\qj)+\tilde{y}\qi(a_{56}-\overline{\beta}a_{32}\qi)+\\+\tilde{y}\qj(Aa_{54}-a_{12}\qj)+
     \tilde{z}\qi(Ba_{55}-\overline{\beta}a_{33}\qi)+\tilde{z}\qj(\Gamma a_{56}-a_{13}\qj)=0
         \end{array}\vspace{2mm}\\
    \begin{array}{c}
     \tilde{x}\qi(a_{64}-a_{21}\qi)+\tilde{x}\qj(a_{65}-\overline{\gamma}a_{31}\qj)+\tilde{y}\qi(a_{66}-a_{22}\qi)+\\+\tilde{y}\qj(Aa_{64}-\overline{\gamma}a_{32}\qj)+
     \tilde{z}\qi(Ba_{65}-a_{23}\qi)+\tilde{z}\qj(\Gamma a_{66}-\overline{\gamma}a_{33}\qj)=0.
         \end{array}
 \end{cases}$$
 These polynomials are identically zero if and only if$$
 \begin{array}{ccc}
    a_{44}=a_{11}\qi=\frac{\overline{\alpha}}{A}a_{22}\qj,  & a_{54}=\overline{\beta}a_{31}\qi=\frac{1}{A}a_{12}\qj, & a_{64}=a_{21}\qi=\frac{\overline{\gamma}}{A}a_{32}\qj, \\
    a_{45}=\frac{1}{B}a_{13}\qi=\overline{\alpha}a_{21}\qj,  & a_{55}=\frac{\overline{\beta}}{B}a_{33}\qi=a_{11}\qj, & a_{65}=\frac{1}{B}a_{23}\qi=\overline{\gamma}a_{31}\qj, \\
    a_{46}=a_{12}\qi=\frac{\overline{\alpha}}{\Gamma}a_{23}\qj,  & a_{56}=\overline{\beta}a_{32}\qi=\frac{1}{\Gamma}a_{13}\qj, & a_{66}=a_{22}\qi=\frac{\overline{\gamma}}{\Gamma}a_{33}\qj.
 \end{array}$$
     Thus $$
     \begin{array}{ccc}
        a_{11}=\left(\frac{\overline{\alpha}}{A}\right)^{q^{-I}}a_{22}\qk, & a_{22}=\left(\frac{\overline{\gamma}}{\Gamma}\right)^{q^{-I}}a_{33}\qk, & a_{33}=\left(\frac{B}{\overline{\beta}}\right)^{q^{-I}}a_{11}\qk,\\
        a_{21}=\left(\frac{\overline{\gamma}}{A}\right)^{q^{-I}}a_{32}\qk, & a_{32}=\left(\frac{1}{\Gamma \overline{\beta}}\right)^{q^{-I}}a_{13}\qk, & a_{13}=(B\overline{\alpha})^{q^{-I}}a_{21}\qk,\\
        a_{31}=\left(\frac{1}{A\overline{\beta}}\right)^{q^{-I}}a_{12}\qk, & a_{12}=\left(\frac{\overline{\alpha}}{\Gamma}\right)^{q^{-I}}a_{23}\qk, & a_{23}=(B\overline{\gamma})^{q^{-I}}a_{31}\qk.
        
     \end{array}$$
     and hence $$\begin{cases}
     a_{11}=\left(\frac{\overline{\alpha}}{A}\right)^{q^{-I}}\left(\frac{\overline{\gamma}}{\Gamma}\right)^{q^{k-I}}\left(\frac{B}{\overline{\beta}}\right)^{q^{2k-I}}a_{11}^{q^{3K}},\\
     a_{21}=\left(\frac{\overline{\gamma}}{A}\right)^{q^{-I}}\left(\frac{1}{\Gamma\overline{\beta}}\right)^{q^{k-I}}\left({\overline{\alpha}}{B}\right)^{q^{2k-I}}a_{21}^{q^{3K}},\\
     a_{31}=\left(\frac{1}{A\overline{\beta}}\right)^{q^{-I}}\left(\frac{\overline{\alpha}}{\Gamma}\right)^{q^{-I}}\left({\overline{\gamma}}{B}\right)^{q^{2k-I}}a_{31}^{q^{3K}}.
     \end{cases}$$
    By our assumptions, there exists $\sigma$ such that one of the three coefficients involved is a $q^{3k}-1$ power. Therefore, we can find one among $ a_{11}, a_{12}, a_{13}$ that is nonzero and satisfies the equation. We can  set the other two  to zero,  constructing a non-singular matrix whose coefficients make the polynomials we are studying identically zero. Thus, the composition of $\sigma$ and the found matrix gives us an equivalence between the two sets.

     Conversely, if for every $\sigma$ all three coefficients are not $q^{3K}-1$ powers, then the only solution to the system is $a_{11}, a_{12}, a_{13} = 0$ for every $\sigma$, making the matrix singular and the tow sets are not equivalent.
 \end{proof}

It is interesting to provide a lower bound on the number of inequivalent scattered sequences. To this aim, we make use of the previous theorem. To simplify the notation, we will denote 
\begin{eqnarray*}
K_{1,\sigma}&:=&\left(\frac{\overline{\alpha}}{\alpha^{\sigma}}\right)\left(\frac{\overline{\gamma}}{\gamma^{\sigma}}\right)^{q^k}\left(\frac{\beta^{\sigma}}{\overline{\beta}}\right)^{q^{2k}},\\
K_{2,\sigma}&:=&\left(\frac{\overline{\gamma}}{\alpha^{\sigma}}\right)\left(\frac{1}{\gamma^{\sigma}\overline{\beta}}\right)^{q^k}\left({\overline{\alpha}}{\beta^{\sigma}}\right)^{q^{2k}},\\
K_{3,\sigma}&:=&\left(\frac{1}{\alpha^{\sigma}\overline{\beta}}\right)\left(\frac{\overline{\alpha}}{\gamma^{\sigma}}\right)^{q^k}\left({\overline{\gamma}}{\beta^{\sigma}}\right)^{q^{2k}}.\\
\end{eqnarray*}

Let us fix a  triple $(\overline{\alpha},\overline{\beta},\overline{\gamma})$ and consider all the  $(\alpha, \beta, \gamma)$ such that the corresponding sets are equivalent. Let $C_1=\frac{\overline{\alpha}\overline{\gamma}^{q^K}}{\overline{\beta}^{q^{2K}}}$. Therefore, $K_{1,id} = C_1\frac{{\beta}^{q^{2K}}}{{\alpha}{\gamma}^{q^K}}$.
Given $\gamma$ and $\beta$, the function $\alpha \longmapsto C_1\frac{{\beta}^{q^{2K}}}{{\alpha}{\gamma}^{q^K}}$ is a permutation of $\mathbb{F}_q$. Since $\gamma$ and $\beta$ can vary in $(q^n-1)^2$ ways, $C_1\frac{{\beta}^{q^{2K}}}{{\alpha}{\gamma}^{q^K}}$ is a $(q^{3k}-1)$-power for $(q^n-1)^3/{\gcd(q^{3K}-1,q^n-1)}$ triples $(\alpha, \beta, \gamma)$.

An equivalence via $\sigma\neq id$ corresponds to an equivalence with $(\alpha^{\sigma}, \beta^{\sigma}, \gamma^{\sigma})$. Via the condition on $K_{1,\sigma}$, there are  at most $nh(q^n-1)^3/({\gcd(q^{3K}-1,q^n-1)})$ sets $U_{{\alpha} ,{\beta} ,{\gamma}}^{I,J,n}$ equivalent to $U_{\overline{\alpha} ,\overline{\beta} ,\overline{\gamma}}^{I,J,n}$.

Arguing analogously for  $K_{2,\sigma}$ and $K_{3,\sigma}$, we obtain that  $$\frac{3nh(q^n-1)^3}{\gcd(q^{3K}-1,q^n-1)}$$
is an upper bound for the number of sets $U_{{\alpha} ,{\beta} ,{\gamma}}^{I,J,n}$ equivalent to a fixed $U_{\overline{\alpha} ,\overline{\beta} ,\overline{\gamma}}^{I,J,n}$.

We now determine a lower bound on the number of distinct instances of $(\alpha, \beta, \gamma)$ giving rice to a scattered set $U_{{\alpha} ,{\beta} ,{\gamma}}^{I,J,n}$. The number of triples $(\alpha, \beta, \gamma) $ such that $\alpha\gamma^{q^K}\beta^{q^{2K}}$ is not a $q^{2K} + q^K + 1$ power can be determined as follows. Fixing $\beta$ and $\gamma$, the function $\alpha\longmapsto\alpha\gamma^{q^K}\beta^{q^{2K}}$ is a permutation. Therefore, there are $$q^n-1-\frac{q^n-1}{\gcd(q^{2K} + q^K + 1,q^n-1)}$$ choices for $\alpha\in \mathbb{F}_{q^n}^*$ such that $\alpha\gamma^{q^K}\beta^{q^{2K}}$ is not a $q^{2K} + q^K + 1$ power. Thus, there are in total 
$$(q^n-1)^3\left (1-\frac{1}{\gcd(q^{2K} + q^K + 1,q^n-1)}\right)$$ triples satisfying this condition. 

This shows that there are at least 
$$\frac{1}{3nh}\left(\gcd(q^{3K}-1,q^n-1)- \frac{\gcd(q^{3K}-1,q^n-1)}{\gcd(q^{2K} + q^K + 1,q^n-1)}\right)$$
equivalent classes.
Note that, when $3K\mid n$, we have that the above bound is roughly $\frac{q^{3K}-q^K}{3nh}$.

\begin{ex}
    Letting $q = 4, n = 12, J = 3$, and $I = 1$, we obtain $q^n-1 = 16777215, q^{2K}+q^K+1 = 273$, and $q^{3K}-1 = 4095$. There are at least $62$ inequivalent examples with these parameters.
\end{ex}

%%%%%%%%%%%%%%%%%%%%%%%%%%%%%%%%%%%%%%%%%%%%%%%%%%%
 \section*{ Acknowledgement } This work was  supported by the
Research Project of MIUR (Italian Office for University and
Research) ``Strutture Geometriche, Combinatoria e loro Applicazioni''.

%%%%%%%%%%%%%%%%%%%%%%%%%%%%%%%%%%%%%%%%%%%%%%%%%%%

\section*{Conflict of interest}
 On behalf of all authors, the corresponding author states that there is no conflict of interest. 
\bibliographystyle{abbrv}
\bibliography{biblio.bib}

\end{document}